\documentclass[pdflatex,sn-mathphys-num]{sn-jnl}


\usepackage{graphicx}%
\usepackage{multirow}%
\usepackage{amsmath,amssymb,amsfonts}%
\usepackage{amsthm}%
\usepackage{mathrsfs}%
\usepackage[title]{appendix}%
\usepackage{xcolor}%
\usepackage{textcomp}%
\usepackage{manyfoot}%
\usepackage{booktabs}%
\usepackage{algorithm}%
\usepackage{algorithmicx}%
\usepackage{algpseudocode}%
\usepackage{listings}%
\usepackage{hyperref}
\usepackage[nameinlink]{cleveref}
\usepackage{tabularx}


\newcommand{\T}{\mathbb{T}}
\newcommand{\R}{\mathbb{R}}
\newcommand{\Z}{\mathbb{Z}}
\newcommand{\N}{\mathbb{N}}

\newcommand{\PP}{\mathcal{P}}

\newcommand{\W}{\mathcal{X}}

\newcommand{\LL}{\mathcal{L}}

\newcommand{\Xx}{\mathbf{x}}
\newcommand{\dXx}{\mathbf{\dot x}}
\newcommand{\Xxu}{\mathbf{x}_u}
\newcommand{\dXxu}{\mathbf{\dot x}_u}
\newcommand{\crit}{\sigma_u}
\newcommand{\cut}{\tau_u}

\DeclareMathOperator*{\argmin}{argmin} 
\DeclareMathOperator*{\argmax}{argmax} 
\DeclareMathOperator*{\supp}{spt}

\DeclareMathOperator*{\eps}{\varepsilon}

\DeclareMathOperator*{\mane}{\alpha[0]}

\DeclareMathOperator*{\Sing}{Sing}
\DeclareMathOperator*{\Crit}{Crit}
\DeclareMathOperator*{\Cut}{Cut}

\DeclareMathOperator{\co}{co}

\def\mane{\alpha[0]}
\def\maneV{\alpha[0]}

\theoremstyle{thmstyleone}%
\newtheorem{theorem}{Theorem}
\newtheorem{proposition}[theorem]{Proposition}
\newtheorem{lemma}[theorem]{Lemma}
\newtheorem{corollary}[theorem]{Corollary}%

\theoremstyle{thmstyletwo}%
\newtheorem{example}{Example}%
\newtheorem{remarks}{Remark}%

\theoremstyle{thmstylethree}%
\newtheorem{definition}{Definition}%

\raggedbottom

\begin{document}

\title[Gradient flows of solutions to Hamilton-Jacobi equations]{Long-time behavior of generalized gradient flows\\ of solutions to Hamilton-Jacobi equations}


\author[1]{\fnm{Paolo} \sur{Albano}}\email{paolo.albano@unibo.it}
\equalcont{These authors contributed equally to this work.}

\author*[2]{\fnm{Piermarco} \sur{Cannarsa}}\email{cannarsa@axp.mat.uniroma2.it}
\equalcont{These authors contributed equally to this work.}

\author[3]{\fnm{Wei} \sur{Cheng}}\email{chengwei@nju.edu.cn}
\equalcont{These authors contributed equally to this work.}

\author[4]{\fnm{Cristian} \sur{Mendico}}\email{cristian.mendico@u-bourgogne.fr}
\equalcont{These authors contributed equally to this work.}

\affil[1]{\orgdiv{Dipartimento di Matematica}, \orgname{Universit\`a di Bologna}, \orgaddress{\street{Piazza 
di Porta San Donato 5}, \city{Bologna}, \postcode{40127}, \state{Italy}}}

\affil*[2]{\orgdiv{Dipartimento di Matematica}, \orgname{Universit\`a di Roma "Tor Vergata''}, \orgaddress{\street{Via della Ricerca Scientifica 1}, \city{Roma}, \postcode{00133}, \state{Italy}}}

\affil[3]{\orgdiv{School of Mathematics}, \orgname{Nanjing University}, \orgaddress{\city{Nanjing}, \postcode{210093}, \state{China}}}

\affil[4]{\orgdiv{Institut de Math\'ematique de Bourgogne, UMR 5584 CNRS}, \orgname{Universit\'e Bourgogne Europe}, \orgaddress{\street{9 avenue Alain Savary}, \city{Dijon}, \postcode{21000}, \state{France}}}

\abstract{We study the long-time behavior of the generalized gradient flow associated with solutions of the critical Hamilton-Jacobi equation for mechanical Hamiltonians on the flat torus. For any semiconcave function, we show that its critical set---points whose superdifferential contains the zero vector---acts as an approximate attractor for the flow. When the function is a solution of the critical equation, the critical set decomposes into regular and singular parts, and we establish a dichotomy describing which part trajectories approach as $t \to \infty$. Our analysis uses limiting occupational measures, a class of invariant measures capturing the asymptotic distribution of the flow. An essential ingredient is a complete proof of the global invariance of the singular set, a result previously announced by Albano (2016) but not fully established.}

\keywords{Hamilton-Jacobi equations; Viscosity solutions; Singularities; Generalized characteristics; Occupational measures.}

\pacs[MSC Classification]{35F21, 35A21, 37J51, 49L25.}

\maketitle

\section{Introduction}\label{sec1}

The dynamics of generalized characteristics associated with semiconcave viscosity solutions of Hamilton-Jacobi equations have been extensively studied in the past two decades, particularly in the framework of weak KAM theory (see, e.g., \cite{Albano_Cannarsa1999,Albano_Cannarsa2002,Bogaevsky2002,Yu2006,MR2237158,Cannarsa_Yu2009,ACNS2013,Cannarsa_Cheng3,Albano2016_1,Cannarsa_Chen_Cheng2019,Cannarsa_Cheng2021b,Cannarsa_Cheng_Fathi2021,Chen_Hong_Zhao2022,Cannarsa_Cheng_Hong2023,Cannarsa_Cheng_Hong_Wang2024}). While a detailed understanding has been achieved for local propagation of singularities and for several classes of time-dependent or discounted equations, a systematic description of the long-time behavior of such flows remains incomplete, even in the stationary setting. In particular, it is still unclear how trajectories distribute asymptotically and to what extent they are attracted by distinguished sets such as the critical set or the projected Aubry set.

In this paper, we develop a new approach to these questions based on occupational measures, a classical tool in dynamical systems and control theory (see, e.g., \cite{Artstein1999}) that has not been previously exploited in this context. This framework allows us to capture the asymptotic behavior of generalized gradient flows in a measure-theoretic way, leading to new dynamical and structural results.

The second main contribution is a \emph{complete proof of global propagation of singularities} for the stationary critical mechanical Hamilton-Jacobi equation
\begin{equation*}
\frac12|Du(x)|^2 + V(x) = \alpha[0], \quad x \in \mathbb{T}^d,
\end{equation*}
with $V \in C^2(\mathbb{T}^d)$ and $\alpha[0] = \max V$. While Albano~\cite{Albano2016_1} announced such a result, the available argument left a gap in both the stationary and time-dependent case; our proof fills this gap for the former class of equations (Theorem~\ref{t:mono}). We also obtain a \emph{dichotomy result} (Proposition~\ref{pro:finite_critical}) describing the first contact between a generalized characteristic and the critical set.

Finally, in Section~\ref{sec:occupational}  we use occupational measures to characterize the asymptotic behavior of generalized characteristics in the critical case. We show that the limiting measures charge only the set $\{V=0\}$ if and only if the trajectory spends asymptotically all time near the projected Aubry set $\mathrm{Crit}^*(u) = \arg\max V$ (Theorem~\ref{the:case=0}). Complementary conditions lead to asymptotic concentration in the \emph{singular} part of the critical set (Theorem~\ref{the:case<0} and Corollary~\ref{cor:summa}). To our knowledge, these measure--theoretic characterizations are new.

Overall, the paper:
\begin{enumerate}
\item Introduces an \emph{occupational-measure framework} for generalized characteristics of semiconcave functions, yielding an approximate attractor property for the critical set.
\item Provides a \emph{complete proof} of global propagation in the stationary critical mechanical case.
\item Establishes \emph{new asymptotic characterizations} via invariant measures for the flow.
\end{enumerate}

We believe these results offer a robust link between dynamical systems methods and weak KAM theory, and can be further developed for more general Hamiltonians and non-stationary settings.

\smallskip
The structure of the paper is as follows. In Section~\ref{sec:semiconcave} we recall basic properties of semiconcave functions and their generalized gradient flows. Section~\ref{sec:GF&OM} introduces the occupational-measure framework and establishes the approximate attractor property for the critical set (Theorem~\ref{the:asympt_critical}), which is one of the main conceptual contributions of the paper. Section~\ref{sec:solutions} is devoted to the proof of global propagation of singularities for the stationary critical Hamilton-Jacobi equation (Theorem~\ref{t:mono}), which provides a complete and simplified resolution of a previously announced result. Finally, in Section~\ref{sec:occupational} we apply the occupational-measure approach to describe the asymptotic behavior of generalized characteristics, obtaining a dichotomy between convergence to the regular and singular parts of the critical set (Theorems~\ref{the:case=0} and~\ref{the:case<0}, and Corollary~\ref{cor:summa}).

\section{Gradient flows of semiconcave functions}
\label{sec:semiconcave}
We denote by $\T^d$ the flat torus. 
One relevant notion to the topics of this paper is that of a semiconcave function.
\begin{definition}\label{def:semiconcave}
We say that a continuous function $u: \T^d \to \R$ is {\it semiconcave} with linear modulus (or, briefly, semiconcave) if there exists a constant $C > 0$ such that 
\begin{equation*}
\lambda u(x) + (1-\lambda) u(y) - u(\lambda x + (1-\lambda) y) \leqslant   \frac{C}{2}\lambda(1-\lambda)|x-y|^{2}
\end{equation*}
for any $x$, $y \in \T^{d} $ and any $\lambda \in [0,1]$. 
\end{definition} 
Any semiconcave function $u: \T^d \to \R$  is Lipschitz continuous and possesses a nonempty super-differential $D^+u(x)$ for all $x\in\T^d$ (see, e.g., \cite{Cannarsa_Sinestrari_book}). We recall that, for any $x\in\T^d$, the generalized Dini semi-differentials of $u$ at $x$ are the closed convex sets defined by
\begin{align*}
D^-u(x)&=\left\{p\in\R^n:\liminf_{y\to x}\frac{u(y)-u(x)-\langle p,y-x\rangle}{|y-x|}\geqslant 0\right\}\\
D^+u(x)&=\left\{p\in\R^n:\limsup_{y\to x}\frac{u(y)-u(x)-\langle p,y-x\rangle}{|y-x|}\leqslant   0\right\}.
\end{align*}
The following sets will play a crucial role in this paper.
\begin{definition}\label{def:critical}
The {\em critical} set of $u$ is defined as 
$$\Crit(u) = \big\{x \in \T^{d}: 0 \in D^{+}u(x)\big\}.$$
The {\em singular} set of $u$ is defined as 
$$\Sing(u) = \{x \in \T^{d}: Du(x)\,\, \text{does not exist}\}.$$
The {\em regular critical} set of $u$ is given by 
\begin{equation*}
\Crit \!\!\,^{*}(u) = \Crit(u) \backslash \Sing(u).
\end{equation*}
\end{definition}
\begin{remarks}\label{rem:critical}
The following are basic properties of the above sets.
\begin{itemize}
\item[$(a)$] The critical set is {\em nonempty}: indeed $\Crit(u)$ contains the maximum points of $u$.
\item[$(b)$] The critical set  is {\em closed} because the set-valued map $D^+u:\T^d\rightrightarrows \R^d$ is upper semi-continuous (see, for instance, \cite[Proposition~3.3.4]{Cannarsa_Sinestrari_book}). 
\item[$(c)$] The regular critical set is {\em nonempty}: indeed $\Crit^*(u)$ contains the minimum points of $u$. For if $u$ attains a local minimum at $x\in\T^d$, then $0\in D^-u(x)$. Since $D^+u(x)\neq \varnothing$ because $u$ is semiconcave, we have that $u$ is differentiable at $x$ and $Du(x)=0$ by \cite[Proposition~3.1.5]{Cannarsa_Sinestrari_book}. 
\end{itemize}
\end{remarks}

\smallskip
In what follows, we will need the following direct consequence of  Alexandrov's theorem on the second order differentiability of convex functions (\cite[Theorem 1.2]{azagra-cappello-hajlasz_2023}). 
\begin{theorem}\label{alexandrov}
Let $u:\T^d \longrightarrow \R$ be a semiconcave function. Then $u$ is differentiable a.e. on  $ \T^d$  and, for a. e.   point $x\in \T^d$ at which $u$ is differentiable, there exists a symmetric matrix, denoted by $D^2u(x)$, such that  
\begin{equation}\label{eq:2diff}
	\lim_{y\to x}\sup_{p\in D^+u(y)} \frac{|p-Du(x)-D^2u(x)(y-x)|}{|y-x|}=0. 
\end{equation}
\end{theorem}
Observe that $x\mapsto D^2u(x)$ is measurable and bounded above as a quadratic form.
\subsection{Generalized gradient flow}
\label{ssec:ggf}
Let  
  $u: \T^d \to \R$ be a semiconcave function. It is well known that for $x \in \T^d$  the problem 
\begin{equation}\label{flow}
\begin{cases}
\dXx(t, x) \in D^{+}u(\Xx(t, x)), \quad t\in[0,\infty)\mbox{ a.e.}
\\
\Xx(0, x)=x.
\end{cases}
\end{equation}
has a unique solution (see, e.g., \cite{Aubin_cellina1984}). Such a solution, which is locally Lipschitz continuous in $(t,x)$ on $[0,\infty)\times\Omega$, is called the {\em generalized gradient flow} of $u$  and is denoted by
 $\Xxu(t, x)$.
In fact,  $\Xxu(t, x)$ is a semi-flow because uniqueness and regularity are guaranteed just for $t\geqslant 0$
(see, e.g.,  \cite[Lemma~4]{Albano_Cannarsa2002}). Clearly,  $\Crit(u)$ coincides with the set of all equilibrium points for $\Xxu$.

\smallskip
We collect below results for the generalized gradient flow of interest to this paper. 
\begin{proposition}\label{pro:ggf}
Let $x\in\T^d$. Then the following holds true.
\begin{itemize}
\item[$(a)$] $\Xxu(\cdot,x)$ has the right derivative $\dXxu^+(t,x)$ for all $t\in[0,\infty)$ and 
 \begin{equation}
\label{eq:ggf}
\dXxu^+(t, x)  = p_{0}\big(\Xxu(t, x)\big),
\end{equation}
where
\begin{equation}
\label{eq:ms}
p_{0}(y)= \argmin_{p \in D^{+}u(y)} |p|\qquad(y\in\T^d).
\end{equation}
\item [$(b)$] The right derivative of $u\big(\Xxu(\cdot, x)\big)$ exists for all $t\in[0,\infty)$ and is given by
\begin{equation}
\label{eq:dggf}
\frac{d^+}{dt} u\big(\Xxu(t, x)\big) =
\big|p_{0}\big(\Xxu(t,x)\big)\big|^{2}.
\end{equation}
\item[$(c)$] For a.e. $t\in [0,T]$ and  every $p\in D^+u (\Xxu (t,x))$, we have that 
	\begin{equation}\label{const}
p\cdot 	p_{0}\big(\Xxu(t,x)\big)=\big|p_{0}\big(\Xxu(t,x)\big)\big|^2.
	\end{equation} 
\end{itemize}
 \end{proposition}
\proof  For points $(a)$ and $(b)$ we refer the reader to \cite[Theorem~1]{ACNS2013}, where the conclusion is obtained for solutions of eikonal type equations on bounded domains. 
Since the argument needed to derive \eqref{eq:ggf} and \eqref{eq:dggf} is of local nature and just uses the differential inclusion in \eqref{flow}, the same proof applies to the present case.

As for point $(c)$, we use an argument taken from the proof of
\cite[Proposition 3.2]{Cannarsa_Cheng_Hong_Wang2024}. 
By the representation of directional derivatives (see \cite{Cannarsa_Sinestrari_book}) and the fact that $u\big(\Xxu(\cdot, x)\big)$ is Lipschitz, for  a.e. $t\in [0,T]$ we have that 
\begin{multline*}
\frac{du}{dt }(\Xxu (t,x))=\frac{d^+u }{dt}(\Xxu (t,x))=\min_{p \in D^+ u(\Xx (t,x))} p\cdot p_{0}\big(\Xxu(t,x)\big)
\\
= \frac{d^-u }{dt}(\Xxu (t,x))=-\min_{p \in D^+ u(\Xxu (t,x))} p\cdot \big(-p_{0}(\Xxu(t,x))\big)=\max _{p \in D^+ u(\Xxu (t,x)} p\cdot p_{0}\big(\Xxu(t,x)\big).
\end{multline*}
Therefore, the function $p\mapsto p\cdot p_{0}(\Xxu(t,x))$ is constant on $D^+ u(\Xxu (t,x))$. The conclusion follows. 
\qed
\begin{remarks}\label{re:p_0} The following properties of the map $p_0:\T^d\to\R^d$ in \eqref{eq:ms} are easily deduced.
\begin{enumerate}[(a)]
\item $x\mapsto p_0(x)$ is single-valued: indeed, $p_0(x)$ is the projection of $0$ onto the non-empty convex compact set $D^+u(x)$.
\item The function $x\mapsto|p_0(x)|$ is lower semi-continuous and, in particular, Lebesgue measurable: once again, this is a consequence of the upper semi-continuity of the set-valued map $D^+u:\T^d\rightrightarrows \R^d$.
\end{enumerate}
\end{remarks}
 \begin{corollary}\label{cor:ggf}
Let $x\in\T^d$. Then:
\begin{enumerate}
	\item[$(i)$] the function $t\mapsto u\big(\Xxu(\cdot,x)\big)$ is nondecreasing on $[0,\infty)$,
	\item[$(ii)$]  there exists a sequence  of positive real numbers $t_k\to\infty $ such that
	\begin{align*}
		p_{0}\big(\Xxu(t_k, x)\big)\to 0\quad\mbox{ as }\quad k\to\infty.
	\end{align*}
\end{enumerate}
\end{corollary}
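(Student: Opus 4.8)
The plan is to derive both assertions from Proposition~\ref{pro:ggf}(b) and the boundedness of $u$ on $\T^d$, with essentially no extra input.

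First I would record that $t\mapsto u\big(\Xxu(t,x)\big)$ is Lipschitz on $[0,\infty)$, hence absolutely continuous on bounded intervals, since it is the composition of the Lipschitz function $u$ with the locally Lipschitz semi-flow $\Xxu(\cdot,x)$. By \eqref{eq:dggf} its right derivative exists everywhere and equals $\big|p_0\big(\Xxu(t,x)\big)\big|^{2}\geqslant0$. For an absolutely continuous function the a.e. classical derivative coincides with the right derivative wherever the latter exists, so the derivative is $\geqslant0$ a.e., and the fundamental theorem of calculus gives
\begin{equation}\label{eq:increment}
u\big(\Xxu(t_2,x)\big)-u\big(\Xxu(t_1,x)\big)=\int_{t_1}^{t_2}\big|p_0\big(\Xxu(s,x)\big)\big|^{2}\,ds\geqslant0\qquad(0\leqslant t_1\leqslant t_2).
\end{equation}
This proves (i); I would also keep \eqref{eq:increment} on record, as it feeds directly into (ii).

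For (ii), I would use that $u$ is continuous on the compact torus, hence bounded above, so together with the monotonicity from (i) the limit $\ell:=\lim_{t\to\infty}u\big(\Xxu(t,x)\big)$ exists and is finite. Taking $t_1=0$ and letting $t_2=T\to\infty$ in \eqref{eq:increment} then yields $\int_0^\infty\big|p_0\big(\Xxu(s,x)\big)\big|^{2}\,ds=\ell-u(x)<\infty$. Since a nonnegative measurable function with finite integral over $[0,\infty)$ cannot stay bounded away from zero on any tail $[T_0,\infty)$, we get $\liminf_{t\to\infty}\big|p_0\big(\Xxu(t,x)\big)\big|=0$, and any sequence $t_k\to\infty$ realising this liminf satisfies $p_0\big(\Xxu(t_k,x)\big)\to0$.

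I do not expect a genuine obstacle here; the only point needing a word of justification is the step ``right derivative $\geqslant0$ everywhere $\Rightarrow$ nondecreasing'', handled above via absolute continuity (and a classical fact for one-sided Dini derivatives of constant sign). Everything else is a short computation.
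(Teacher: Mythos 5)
Your proof is correct and follows essentially the same route as the paper: both assertions are derived by integrating \eqref{eq:dggf} and using the boundedness of $u$ on the compact torus. The paper's proof is terser (it simply says ``Point (i) is a direct consequence of \eqref{eq:dggf}'' and ``by integrating \eqref{eq:dggf}''), while you supply the justification — absolute continuity of $t\mapsto u(\Xxu(t,x))$ and the agreement of the right derivative with the a.e.\ derivative — that makes the integration step rigorous; this is a welcome bit of precision, not a departure from the argument.
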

\proof
Point $(i)$ is a direct consequence of \eqref{eq:dggf}. As for $(ii)$,  by integrating \eqref{eq:dggf} we obtain
\begin{equation*}
\int_0^\infty \big|p_{0}\big(\Xxu(t,x)\big)\big|^{2}dt\leqslant   2\|u\|_\infty.
\end{equation*}
The conclusion follows.
\qed
\begin{remarks}\label{re:omega-limit}
We observe that point $(ii)$ above yields that, for any $x\in\T^d$,  the $\omega$-limit set of $\Xxu(\cdot,x)$
intersects the critical set of $u$ or, equivalently,
\begin{equation}
\label{eq:omega_limit}
\liminf_{t\to\infty}d_{\Crit(u)}\big(\Xxu(t,x)\big)=0\,,\quad \forall x\in\T^d\,,
\end{equation}
where  $d_S(x)$ stands for the Euclidean distance of $x$ from the closed set $S$. In \cite[Theorem~2.5]{Cannarsa_Chen_Cheng2019} (see also \cite{Hurley1995}), for weak KAM solutions $u$ of Hamilton-Jacobi equations on the torus like the ones of interest to this paper, the $\omega$-limit set of $\Xxu(\cdot,x)$ was proven to actually coincide with $\Crit(u)$ provided that the regular values\footnote{We recall that $\lambda\in\R$ is a regular value of $u$ if $\{x\in\T^d~:~u(x)=\lambda\}\cap\Crit(u)=\varnothing$.} of $u$ are dense in $\R$. In this case, \eqref{eq:omega_limit} is replaced by the stronger property
\begin{equation}
\label{eq:Omega_limit}
\lim_{t\to\infty}d_{\Crit(u)}\big(\Xxu(t,x)\big)=0\,,\quad \forall x\in\T^d\,.
\end{equation}
Moreover, Morse-Sard type results from~\cite{Rifford2008} ensure  such a density property for $d=2$, or even $d=3$ when the Hamiltonian is sufficiently smooth. In the next section, we will obtain a relaxed version of \eqref{eq:Omega_limit} under no extra assumption on $u$ than semiconcavity.
\end{remarks}
\section{Occupational measures of a gradient flow}
\label{sec:GF&OM}
The purpose of this section is to introduce occupational measures as a tool to describe the asymptotic behavior of generalized gradient flows. Roughly speaking, these measures capture the time distribution of trajectories and allow us to obtain information on their long-time behavior without requiring pointwise convergence.

Let us denote by  $\PP(\T^d)$ the family of all probability measures on $\T^d$. We recall that a sequence $\{\mu_k\}_{k\in\N}\subset \PP(\T^d)$ {\em weakly converges} to a measure $\mu\in\PP(\T^d)$ (or, $\mu_k\rightharpoonup \mu$) if
\begin{equation*} 
\lim_{k\to\infty}\int_{\T^d} f(x) d\mu_k(x) =\int_{\T^d} f(x) d\mu(x) 
\qquad  \forall  f \in C(\T^d). 
\end{equation*}

Let  
 $u: \T^{d} \to \R$ be a semiconcave  function and let $\Xxu$ be the semi-flow associated with \eqref{flow}.
We recall that a  measure $\mu \in \PP(\T^{d})$ is {\em invariant under} $\Xxu$ (or, $\Xxu$-{\em invariant}) if
\begin{equation*}
 \int_{\T^{d}} 
f\big(\Xxu(t, x)\big)d\mu(x)=\int_{\T^{d}} f(x) d\mu(x) 
\qquad  \forall t\geqslant 0,\;\forall f \in C(\T^{d}). 
\end{equation*}
The following result provides a characterization of invariant measures for the generalized gradient flow.
\begin{proposition}\label{pro:cnes}
A measure $\mu \in \PP(\T^{d})$ is invariant under $\Xxu(t, \cdot)$ if and only if 
\begin{equation}
\label{eq:cnes}
\supp(\mu) \subset \Crit(u).
\end{equation}
\end{proposition}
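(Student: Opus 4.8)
The plan is to prove both implications using the basic properties of the gradient flow established in Proposition~\ref{pro:ggf} and Corollary~\ref{cor:ggf}, together with a Krylov--Bogoliubov-type argument.

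\textbf{Sufficiency.} Suppose $\supp(\mu)\subset\Crit(u)$. I would first observe that for $x\in\Crit(u)$ one has $p_0(x)=0$: indeed $0\in D^+u(x)$ means $0$ is the projection of $0$ onto $D^+u(x)$, so $p_0(x)=0$, and then by \eqref{eq:ggf} the right derivative $\dXxu^+(t,x)$ at $t=0$ vanishes. To conclude that $\Xxu(t,x)=x$ for all $t\geqslant0$ when $x\in\Crit(u)$, I would use that $\Crit(u)$ is closed (Remark~\ref{re:p_0}(c)) together with monotonicity of $u\big(\Xxu(\cdot,x)\big)$: from \eqref{eq:dggf}, $\frac{d^+}{dt}u(\Xxu(t,x))=|p_0(\Xxu(t,x))|^2$, and integrating gives $\int_0^\infty|p_0(\Xxu(t,x))|^2\,dt\leqslant 2\|u\|_\infty$; combined with the fact that $u(\Xxu(t,x))\geqslant u(x)$ while $\Xxu(0,x)=x$ is a critical point where the flow is instantaneously stationary, one argues the trajectory cannot leave $\Crit(u)$. (The cleanest route: if $x\in\Crit(u)$ then $x$ is a maximum of $t\mapsto u(\Xxu(t,x))$ restricted near $0$ is not quite enough; instead use that $\Xxu(t,x)$ is the unique solution and $\Xx(t)\equiv x$ solves \eqref{flow} since $0\in D^+u(x)$, hence by uniqueness $\Xxu(t,x)=x$.) This last point is the real content: the constant curve is an admissible solution of the differential inclusion exactly when $x\in\Crit(u)$, and uniqueness does the rest. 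Then for any $f\in C(\T^d)$ and $t\geqslant0$, $\int f(\Xxu(t,x))\,d\mu(x)=\int f(x)\,d\mu(x)$ because the integrand agrees $\mu$-a.e.

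\textbf{Necessity.} Suppose $\mu$ is $\Xxu$-invariant; I must show $\supp(\mu)\subset\Crit(u)$. The idea is to integrate \eqref{eq:dggf} against $\mu$. For fixed $s>0$, invariance gives $\int u(\Xxu(s,x))\,d\mu(x)=\int u(x)\,d\mu(x)$. On the other hand, using $(i)$ of Corollary~\ref{cor:ggf}, $u(\Xxu(s,x))-u(x)=\int_0^s\frac{d^+}{dt}u(\Xxu(t,x))\,dt=\int_0^s|p_0(\Xxu(t,x))|^2\,dt\geqslant0$. Integrating this identity in $x$ against $\mu$ and applying Tonelli (the integrand is nonnegative and measurable by Remark~\ref{re:p_0}(b)) yields
\begin{equation*}
0=\int_{\T^d}\big(u(\Xxu(s,x))-u(x)\big)\,d\mu(x)=\int_0^s\!\!\int_{\T^d}\big|p_0(\Xxu(t,x))\big|^2\,d\mu(x)\,dt.
\end{equation*}
Hence $\int_{\T^d}|p_0(\Xxu(t,x))|^2\,d\mu(x)=0$ for a.e.\ $t\in[0,s]$, and in particular there is a sequence $t_n\downarrow0$ with $|p_0(\Xxu(t_n,x))|=0$ for $\mu$-a.e.\ $x$; letting $t_n\to0$ and using lower semicontinuity of $|p_0|$ (Remark~\ref{re:p_0}(b)) together with continuity of $x\mapsto\Xxu(t_n,x)$, one concludes $|p_0(x)|=0$, i.e.\ $0\in D^+u(x)$, hence $x\in\Crit(u)$, for $\mu$-a.e.\ $x$. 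Since $\Crit(u)$ is closed, $\supp(\mu)\subset\Crit(u)$.

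\textbf{Main obstacle.} The delicate point is the passage from ``$|p_0(\Xxu(t_n,x))|=0$ for $\mu$-a.e.\ $x$ along a null sequence $t_n$'' to the pointwise conclusion $|p_0(x)|=0$ $\mu$-a.e.; here one must be careful that the exceptional $\mu$-null sets depend on $n$, so I would instead argue directly at $t=0$: from $\int_0^s\int_{\T^d}|p_0(\Xxu(t,x))|^2\,d\mu\,dt=0$ and the fact that $\Xxu(0,x)=x$, combined with right-continuity and the lower semicontinuity of $t\mapsto\int|p_0(\Xxu(t,x))|^2\,d\mu(x)$ (via Fatou), deduce that this integral is $0$ at $t=0$ as well, giving $|p_0(x)|=0$ $\mu$-a.e.\ in one stroke. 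Alternatively, and perhaps more cleanly, note $p_0(x)=\dXxu^+(0,x)$ and use that the right-hand derivative of $u(\Xxu(\cdot,x))$ at $0$ integrates to zero. The sufficiency direction's only subtlety is justifying that the constant curve solves the differential inclusion, which is immediate from the definition of $\Crit(u)$.
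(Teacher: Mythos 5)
Your proof is correct and follows essentially the same route as the paper. For sufficiency you ultimately identify the key point exactly as the paper does (the constant curve solves the inclusion at a critical point, so uniqueness gives $\Xxu(t,x)\equiv x$), and for necessity you integrate \eqref{eq:dggf} against $\mu$ and use lower semicontinuity of $|p_0|$ (Remark~\ref{re:p_0}) to upgrade from ``a.e.\ $t$'' to $t=0$ on $\supp(\mu)$ --- the same ingredients the paper uses, only reorganised (you integrate and apply Tonelli/Fatou where the paper differentiates $\int u(\Xxu(t,\cdot))\,d\mu$ under the integral and then invokes semicontinuity directly).
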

\proof 
Let  $\mu \in \PP(\T^{d})$ satisfy \eqref{eq:cnes}. Then, for any  $f \in C(\T^{d})$ and $t\geqslant 0$ we have that 
\begin{equation*}
\int_{\T^{d}} f\big(\Xxu(t, x)\big)\ d\mu(x) = \int_{\Crit(u)} f\big(\Xxu(t, x)\big)\ d\mu(x)
\end{equation*}
Now, observe that $\Xxu(t,x)\equiv x$  for any $x \in \Crit(u)$ by uniqueness.  So,
\begin{equation*}
 \int_{\T^{d}} f\big(\Xxu(t, x)\big)\ d\mu(x) =\int_{\Crit(u)} f(x)\ d\mu(x)=\int_{\T^{d}} f(x) d\mu(x) .
\end{equation*}
 Thus, $\mu$ is $\Xxu$-invariant.

Conversely, let $\mu \in \PP(\T^d)$ be invariant under $\Xxu$. Then, \eqref{eq:dggf} ensures that
\begin{equation*}
0 = \frac{d}{dt} \int_{\T^{d}} u\big(\Xxu(t,x)\big)\ d\mu(x) = \int_{\T^{d}} \big|p_{0}\big(\Xxu\big(t,x)\big)\big|^{2}\ d\mu(x) \quad (t \geq 0\mbox{ a.e.})
\end{equation*}
Thus, for a.e. $t \geqslant 0$ we have that $p_{0}\big(\Xxu\big(t,\cdot)\big)=0$  on the support of $\mu$. In fact, $\big|p_{0}\big(\Xxu\big(t,\cdot)\big)|$ is lower semi-continuous owing to Remark~\ref{re:p_0}. So, $p_{0}\big(\Xxu\big(t,\cdot)\big)$ vanishes on the support of $\mu$ for all $t\geqslant 0$. In particular,  $p_{0}(x)\equiv 0$  on the support of $\mu$  and $\supp(\mu) \subset \Crit(u)$. \qed

\bigskip
For any $x \in \T^{d}$ and any $T > 0$ the measure $\mu^{T}_{x} \in \PP(\T^{d})$, defined by 
\begin{equation*}
\int_{\T^{d}} f(y)\ d\mu^{T}_{x}(y) = \frac{1}{T} \int_{0}^{T} f\big(\Xxu(t,x)\big)\ dt \quad \forall f \in C(\T^{d}),
\end{equation*}
is called the {\em individual occupational measure of $\Xxu(\cdot,x)$ on $[0,T]$}. Occupational measures have long been in use in the mathematical literature, see \cite{Artstein1999} for a systematic use of such measures in the study of differential inclusions.
\begin{definition}\label{def:wom}
We say that a measure $\mu \in \PP(\T^{d})$ is an {\em  occupational measure} from a point $x\in\T^d$ for the semi-flow $\Xxu$ if there exists a sequence $\{T_{k}\}_{k \in \N}$, with $T_{k} \uparrow \infty$,
such that 
\begin{equation}
\label{eq:wom}
\mu_{x}^{T_{k}} \rightharpoonup \mu\qquad(k\to \infty).
\end{equation}
We denote by $\W_u(x)$ the family of all   occupational measures from $x$ for  $\Xxu$.
\end{definition}
By repeating the proof of the classical Krylov-Bogoliubov theorem for semiflows (see e.g. \cite{KrylovBogolyubov1937,KatokHasselblatt,Petersen1983}), 
one ensures the existence of occupational measures and their invariance properties.
\begin{proposition}\label{lemma2}
Let  $u$ be a semiconcave  function on $\T^d$. Then $\W_u(x)\neq\emptyset$ for any $x\in\T^d$. Moreover, any  $\mu\in \W_u(x)$ is invariant under $\Xxu$.
\end{proposition}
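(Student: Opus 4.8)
The plan is to follow the classical Krylov--Bogoliubov scheme, adapted to the semi-flow setting. The statement has two parts: nonemptiness of $\W_u(x)$ and invariance of its elements under $\Xxu$.

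For nonemptiness, I would start from the family $\{\mu_x^T\}_{T>0}\subset\PP(\T^d)$. Since $\T^d$ is compact, $\PP(\T^d)$ is weak-$*$ sequentially compact (Prokhorov, or simply the Banach--Alaoglu theorem applied to the unit ball of $C(\T^d)^*$ together with the fact that the weak-$*$ limit of probability measures on a compact space is again a probability measure). Hence, for any sequence $T_k\uparrow\infty$, the individual occupational measures $\mu_x^{T_k}$ admit a weakly convergent subsequence, whose limit lies in $\W_u(x)$ by Definition~\ref{def:wom}. This already gives $\W_u(x)\neq\emptyset$.

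For invariance, fix $\mu\in\W_u(x)$, say $\mu_x^{T_k}\rightharpoonup\mu$, and fix $f\in C(\T^d)$ and $s\geqslant 0$. The key computation is to estimate the defect
\begin{equation*}
\int_{\T^d}f\big(\Xxu(s,y)\big)\,d\mu_x^{T_k}(y)-\int_{\T^d}f(y)\,d\mu_x^{T_k}(y)
=\frac{1}{T_k}\int_0^{T_k}\Big[f\big(\Xxu(s,\Xxu(t,x))\big)-f\big(\Xxu(t,x)\big)\Big]\,dt.
\end{equation*}
Here one uses the semi-group property $\Xxu(s,\Xxu(t,x))=\Xxu(t+s,x)$, valid for $s,t\geqslant 0$ by uniqueness of solutions to \eqref{flow}, so that the bracketed integrand is $f(\Xxu(t+s,x))-f(\Xxu(t,x))$. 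Changing variables, the integral telescopes to
\begin{equation*}
\frac{1}{T_k}\left(\int_{T_k}^{T_k+s}f\big(\Xxu(t,x)\big)\,dt-\int_0^{s}f\big(\Xxu(t,x)\big)\,dt\right),
\end{equation*}
whose absolute value is at most $\frac{2s\|f\|_\infty}{T_k}\to 0$ as $k\to\infty$. The one subtlety is passing to the limit in the term $\int_{\T^d}f(\Xxu(s,y))\,d\mu_x^{T_k}(y)$: since $\mu_x^{T_k}\rightharpoonup\mu$ and the function $y\mapsto f(\Xxu(s,y))$ is continuous on $\T^d$ (by local Lipschitz continuity of $\Xxu$ in the space variable, recalled after \eqref{flow}), this integral converges to $\int_{\T^d}f(\Xxu(s,y))\,d\mu(y)$; the other term converges to $\int_{\T^d}f\,d\mu$ trivially. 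Combining, $\int_{\T^d}f(\Xxu(s,y))\,d\mu(y)=\int_{\T^d}f(y)\,d\mu(y)$ for every $f\in C(\T^d)$ and every $s\geqslant 0$, i.e., $\mu$ is $\Xxu$-invariant.

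The main point requiring care — though not a deep obstacle — is ensuring that $y\mapsto f(\Xxu(s,y))$ is a legitimate continuous test function, so that weak convergence can be applied to it; this rests on the continuous dependence of the generalized gradient flow on the initial datum, which is available here because $\Xxu$ is (locally) Lipschitz in $(t,x)$. Everything else is the standard telescoping estimate plus compactness of $\PP(\T^d)$.
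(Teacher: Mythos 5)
Your proposal is correct and follows essentially the same Krylov--Bogoliubov argument as the paper: Prokhorov/compactness of $\PP(\T^d)$ for nonemptiness, and the semigroup property plus the telescoping bound $2s\|f\|_\infty/T_k\to 0$ for invariance. The only cosmetic difference is that you pass to the limit directly in $\int f(\Xxu(s,y))\,d\mu_x^{T_k}$ (making the continuity of $y\mapsto f(\Xxu(s,y))$ explicit), whereas the paper packages the same limit through a three-term decomposition of $\int[f(\Xxu(t,y))-f(y)]\,d\mu$.
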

In view of the above result and Proposition~\ref{pro:cnes}, any  $\mu\in \W_u(x)$ is supported by the critical set of $u$. Notice, however, that the portion of $\Crit(u)$ where $\mu$ concentrates depends on the initial point $x$.

\smallskip
We will now use  occupational measures to establish the main result of this section, showing that the critical set acts as an approximate attractor for the generalized gradient flow.
\begin{theorem}\label{the:asympt_critical}
Let  $u$ be a semiconcave  function on $\T^d$ and
 let $x\in\T^d$. Then for any $\eps>0$
\begin{equation}
\label{asympt_critical}
\lim_{T\to\infty}\frac {1}{T}
\LL^{1} \Big(  \big\{t \in [0,T]~:~d_{\Crit(u)}\big(\Xxu(t,x)\big)\geqslant\eps\big\}  \Big) =0, 
\end{equation}
where $\LL^{1}$ denotes the one-dimensional Lebesgue measure.
\end{theorem}
\proof Let us argue by contradiction assuming that there exist  numbers $\eps, \delta>0$ and a sequence $T_{k} \uparrow \infty$ such that
\begin{equation*}
\frac {1}{T_k}
\LL^{1} \big(  \big\{t \in [0,T_k]~:~d_{\Crit(u)}\big(\Xxu(t,x)\big)\geqslant\eps\big\}  \big) \geqslant\delta\qquad\forall k\in\N.
\end{equation*}
Without loss of generality we can suppose that $\mu_{x}^{T_{k}} \rightharpoonup \mu$ as $k\to \infty$ for some $\mu\in\W_u(x)$.
Therefore, for all $k\in\N$ we have that
\begin{multline}\label{eq;no_asympt_critical}
\int_{\T^{d}} d_{\Crit(u)}(y)\ d\mu^{T_k}_{x}(y) =\frac{1}{T_k}\int_0^{T_k}d_{\Crit(u)}\big(\Xxu(t,x)\big)dt
\\
\geqslant
\frac {\eps}{T_k}
\LL^{1} \big(  \big\{t \in [0,T_k]~:~d_{\Crit(u)}\big(\Xxu(t,x)\big)\geqslant\eps\big\}  \big) \geqslant\eps\delta.
\end{multline}
On the other hand, since $\mu$ has support in the closed set $\Crit(u)$ by Proposition~\ref{pro:cnes}, 
\begin{equation*}
\lim_{k\to\infty}\int_{\T^{d}} d_{\Crit(u)}(y)\ d\mu^{T_k}_{x}(y)=\int_{\T^{d}} d_{\Crit(u)}(y)\ d\mu(y)=0,
\end{equation*}
in contrast with \eqref{eq;no_asympt_critical}. The conclusion follows.
\qed

\smallskip
A  finer analysis of the asymptotic behavior  of $\Xxu$,  for  solutions of Hamilton-Jacobi equations, will be presented in Section~\ref{sec:occupational}.
Now, we  show that $\W_u$ remains constant along $\Xxu$.
\begin{theorem}\label{the:stability}
Let  $u$ be a semiconcave  function on $\T^d$.
Then for any $x\in\T^d$ we have that
\begin{equation}
\label{eq:stability}
\W_u(x)=\W_u\big(\Xxu(t,x)\big)\qquad\forall t\geqslant 0.
\end{equation}
\end{theorem}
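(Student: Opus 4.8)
The plan is to exploit the semigroup property $\Xxu(s,\Xxu(t,x))=\Xxu(s+t,x)$ in order to compare the individual occupational measures based at $\Xxu(t,x)$ with those based at $x$, and then to pass to the limit as the averaging window tends to infinity. Fix $t\geqslant 0$ and set $y=\Xxu(t,x)$. For every $f\in C(\T^d)$ and every $T>0$, the semigroup property gives
\[
\int_{\T^d} f(z)\,d\mu^T_y(z)=\frac 1T\int_0^T f\big(\Xxu(s+t,x)\big)\,ds=\frac 1T\int_t^{T+t} f\big(\Xxu(s,x)\big)\,ds,
\]
and, comparing this with the definition of $\mu^{T+t}_x$, one obtains the identity
\[
\int_{\T^d} f\,d\mu^T_y=\frac{T+t}{T}\int_{\T^d} f\,d\mu^{T+t}_x-\frac 1T\int_0^t f\big(\Xxu(s,x)\big)\,ds.
\]
Since $(T+t)/T\to 1$ and $\big|\tfrac1T\int_0^t f(\Xxu(s,x))\,ds\big|\leqslant t\|f\|_\infty/T\to 0$ as $T\to\infty$, the correction terms are negligible in the limit.

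First I would establish the inclusion $\W_u(x)\subset\W_u(y)$. Given $\mu\in\W_u(x)$, I would pick a sequence $S_k\uparrow\infty$ with $\mu^{S_k}_x\rightharpoonup\mu$; for $k$ large enough that $S_k>t$, set $T_k=S_k-t$, so that $T_k\uparrow\infty$ and $S_k=T_k+t$. Plugging $T=T_k$ into the identity above and letting $k\to\infty$ shows that $\int_{\T^d} f\,d\mu^{T_k}_y\to\int_{\T^d} f\,d\mu$ for every $f\in C(\T^d)$, that is, $\mu^{T_k}_y\rightharpoonup\mu$, hence $\mu\in\W_u(y)$. For the reverse inclusion, given $\nu\in\W_u(y)$ I would choose $T_k\uparrow\infty$ with $\mu^{T_k}_y\rightharpoonup\nu$ and rewrite the same identity as
\[
\int_{\T^d} f\,d\mu^{T_k+t}_x=\frac{T_k}{T_k+t}\Big(\int_{\T^d} f\,d\mu^{T_k}_y+\frac 1{T_k}\int_0^t f\big(\Xxu(s,x)\big)\,ds\Big);
\]
letting $k\to\infty$ gives $\mu^{T_k+t}_x\rightharpoonup\nu$, and since $T_k+t\uparrow\infty$ this proves $\nu\in\W_u(x)$. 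Combining the two inclusions yields \eqref{eq:stability}.

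I do not expect a genuine obstacle here: the argument is essentially the bookkeeping already carried out in the proof of the Krylov-Bogoliubov step, Proposition~\ref{lemma2}. The only point requiring a little care is to check that the time shift $t$ is absorbed harmlessly, on the one hand into the renormalisation $T\mapsto T+t$ of the averaging window, and on the other into the boundary contribution $\frac1T\int_0^t f(\Xxu(s,x))\,ds$; both are of order $O(1/T)$ and therefore do not affect the weak limit. No hypothesis beyond semiconcavity of $u$ is used, since that is exactly what guarantees the existence, uniqueness and hence the semigroup property of $\Xxu$.
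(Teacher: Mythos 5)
Your proof is correct and follows essentially the same route as the paper: both arguments rest on the semigroup property of $\Xxu$ and the same elementary identity relating $\mu^T_{\Xxu(t,x)}$ to $\mu^{T+t}_x$ up to an $O(1/T)$ correction. The only cosmetic difference is that you spell out the reverse inclusion explicitly, whereas the paper states it follows "by a similar argument."
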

\proof
Fix $t\geqslant 0$. Let  $\mu\in\W_u(x)$ and let $T_{k} \uparrow \infty$ be such that $\mu_{x}^{T_{k}} \rightharpoonup \mu$ as $k\to \infty$. For all $k$ large enough, so that $T_k>t$, let us set $S_k=T_k-t$.  Then, by the semigroup property of the flow we have that, for all $ f \in C(\T^{d})$, 
\begin{eqnarray*}
\lefteqn{\int_{\T^{d}} f(y)\ d\mu^{S_k}_{\Xxu(t,x)}(y) =\frac{1}{S_k} \int_{0}^{S_k} f\big(\Xxu(s+t,x)\big)\ ds}
\\
& =&\frac{1}{S_k} \int_{t}^{T_k} f\big(\Xxu(s,x)\big)\ ds
\\
& =&\frac{1}{T_k-t} \int_{0}^{T_k} f\big(\Xxu(s,x)\big)\ ds-\frac{1}{S_k} \int_{0}^{t} f\big(\Xxu(s,x)\big)\ ds
\\
& =&\frac{T_k}{T_k-t}\int_{\T^{d}} f(y)\ d\mu^{T_k}_{x}(y) -\frac{1}{S_k} \int_{0}^{t} f\big(\Xxu(s,x)\big)\ ds \stackrel{k\to\infty}{\longrightarrow} \int_{\T^{d}} f(y)\ d\mu(y).
\end{eqnarray*}
Therefore, $\mu\in \W_u\big(\Xxu(t,x)\big)$ and so $\W_u(x)\subset\W_u\big(\Xxu(t,x)\big)$. Since the reverse inclusion can be proved by a similar argument, the proof is complete.
\qed

\smallskip
Next, we investigate the  case when occupational measures are Dirac.
\begin{theorem}\label{the:delta_lom}
Let $u$ be a semiconcave  function on $\T^d$.
Let $x, \overline x \in \T^{d}$ and let $\{T_{k}\}_{k \in \N}$ be a real sequence, with $T_{k} \uparrow \infty$. The following properties are equivalent:
\begin{itemize}
\item[(a)] $\mu_{x}^{T_{k}} \rightharpoonup \delta_{\overline x}$ \;as\; $k\to \infty$,
\item[(b)] for any $\eps>0$ 
\begin{equation}
\label{eq:delta_lom}
\lim_{k\to\infty}\frac {1}{T_k}
\LL^{1} \big(  \big\{t \in [0,T_{k}]~:~|\Xxu(t,x)-\overline x|\geqslant\eps\big\}  \big) =0.
\end{equation}
\end{itemize}
\end{theorem}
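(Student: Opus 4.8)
The plan is to establish the two implications separately, in each case reducing to the definition of weak convergence by testing against a well-chosen continuous function; the statement is, in essence, the equivalence between weak convergence of $\mu_x^{T_k}$ to a point mass and convergence ``in measure'' of the flow $t\mapsto\Xxu(t,x)$ towards $\overline x$ with respect to the normalised Lebesgue measure $\frac1{T_k}\LL^1$ on $[0,T_k]$.

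\emph{$(a)\Rightarrow(b)$.} I would fix $\eps>0$ and set $g_\eps(y)=\min\{|y-\overline x|/\eps,\,1\}$, where $|y-\overline x|$ denotes the geodesic distance on $\T^d$; then $g_\eps\in C(\T^d)$, $0\le g_\eps\le 1$, $g_\eps(\overline x)=0$, and $g_\eps\equiv 1$ on $\{y:|y-\overline x|\ge\eps\}$. By the definition of $\mu_x^{T_k}$,
$$\int_{\T^d} g_\eps\,d\mu_x^{T_k}=\frac1{T_k}\int_0^{T_k}g_\eps\big(\Xxu(t,x)\big)\,dt\ \geqslant\ \frac1{T_k}\LL^1\big(\{t\in[0,T_k]:|\Xxu(t,x)-\overline x|\geqslant\eps\}\big).$$
Since $g_\eps$ is continuous and $\mu_x^{T_k}\rightharpoonup\delta_{\overline x}$, the left-hand side tends to $g_\eps(\overline x)=0$, and \eqref{eq:delta_lom} follows.

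\emph{$(b)\Rightarrow(a)$.} I would take $f\in C(\T^d)$ and $\eta>0$. By uniform continuity of $f$ on the compact torus there is $\eps>0$ with $|f(y)-f(\overline x)|<\eta$ whenever $|y-\overline x|<\eps$. Writing $A_k=\{t\in[0,T_k]:|\Xxu(t,x)-\overline x|\geqslant\eps\}$ and using that each $\mu_x^{T_k}$ is a probability measure,
$$\Big|\int_{\T^d}f\,d\mu_x^{T_k}-f(\overline x)\Big|\ \leqslant\ \frac1{T_k}\int_0^{T_k}\big|f\big(\Xxu(t,x)\big)-f(\overline x)\big|\,dt\ \leqslant\ \frac{2\|f\|_\infty}{T_k}\LL^1(A_k)+\eta.$$
Letting $k\to\infty$ and invoking \eqref{eq:delta_lom} gives $\limsup_k\big|\int f\,d\mu_x^{T_k}-f(\overline x)\big|\leqslant\eta$; as $\eta$ is arbitrary, this yields $\mu_x^{T_k}\rightharpoonup\delta_{\overline x}$.

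I do not expect a genuine obstacle: the argument is soft. The only points that deserve a little care are interpreting $|\cdot|$ consistently as the geodesic distance on $\T^d$ so that the cutoff $g_\eps$ is globally continuous, and recording that $\mu_x^{T_k}\in\PP(\T^d)$, which legitimises subtracting the constant $f(\overline x)$ inside the averaging integral.
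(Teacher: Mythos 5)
Your proof is correct and follows essentially the same route as the paper: a Chebyshev-type estimate for $(a)\Rightarrow(b)$ and a split over the sets $\{|\Xxu(t,x)-\overline x|\geqslant\eps\}$ and its complement together with uniform continuity of $f$ for $(b)\Rightarrow(a)$. The only cosmetic difference is that in $(a)\Rightarrow(b)$ you test against the truncated cutoff $g_\eps$, whereas the paper tests against $y\mapsto|y-\overline x|$ directly and divides by $\eps$; both devices yield the same bound.
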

\proof For any $\eps>0$ and any $T>0$ let us set
\begin{equation*}
F_{\eps}(T)= 
\big\{t \in [0,T]~:~|\Xxu(t,x)-\overline x|\geqslant\eps\big\}
\quad\mbox{and}\quad
G_{\eps}(T)=[0,T]\setminus F_{\eps}(T).
\end{equation*}
\fbox{$(a)\Rightarrow(b)$} Fix any $\eps>0$. Then we have that
\begin{equation*}
\frac {1}{T_k}\int_0^{T_k}|\Xxu(t,x)-\overline x|dt
\geqslant \frac {\eps}{T_k}
\LL^{1} \big(F_{\eps}(T_k) \big) .
\end{equation*}
Since the left-hand side converges to $\int_{\T^d}|y-\overline x|d\delta_{\overline x}(y)=0$ as $k\to\infty$, \eqref{eq:delta_lom} follows.

\smallskip\noindent
\fbox{$(b)\Rightarrow(a)$} Let $f\in C(\T^d)$. Then for any $\eps>0$
\begin{multline*}
\frac {1}{T_k}\Big|\int_0^{T_k}\Big(f\big(\Xxu(t,x)\big)-f(\overline x)\Big)dt\Big|
\\
\leqslant
\frac {1}{T_k}\int_0^{T_k}\big|f\big(\Xxu(t,x)\big)-f(\overline x)\big|
\Big(\chi_{_{F_{\eps}(T_k)}}(t)+\chi_{_{G_{\eps}(T_k)}}(t)
\Big)dt
\\
\leqslant \frac {2\|f\|_\infty}{T_k}\LL^{1} \big(F_{\eps}(T_k) \big) +
\omega_f(\eps),
\end{multline*}
where $\omega_f(\eps)=\sup_{|y-z|\leqslant\eps}|f(y)-f(z)|$. Therefore, owing to \eqref{eq:delta_lom},
\begin{equation*}
\limsup_{k\to\infty}\frac {1}{T_k}\Big|\int_0^{T_k}\Big(f\big(\Xxu(t,x)\big)-f(\overline x)\Big)dt\Big|
\leqslant \omega_f(\eps).
\end{equation*}
Since $\eps$ is arbitrary and $\omega_f(\eps)\to 0$ as $\eps\to 0$, the conclusion follows.
\qed

\medskip
We  now characterize the case when $\W_u(x)$ is {\em atomic}, that is  $\W_u(x)=\{\delta_{\overline x}\}$ for some $\overline{x}$.
\begin{corollary}\label{cor:delta_lom}
Let $x, \overline x \in \T^{d}$. The following properties are equivalent:
\begin{itemize}
	\item[$(a)$] $\W_u(x)=\{\delta_{\overline x}\}$,
	\item[$(b)$] $\mu_{x}^{T} \rightharpoonup \delta_{\overline x}$\; as \;$T\to \infty$,
	\item[$(c)$] for any $\eps>0$
	\begin{align*}
		\lim_{T\to\infty}\frac {1}{T}
\LL^{1} \big(  \big\{t \in [0,T]~:~|\Xxu(t,x)-\overline x|\geqslant\eps\big\}  \big) =0.
	\end{align*}
\end{itemize}
\end{corollary}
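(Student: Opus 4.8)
The plan is to prove the cycle $(a)\Rightarrow(b)\Rightarrow(c)\Rightarrow(b)\Rightarrow(a)$, relying on Theorem~\ref{the:delta_lom} for the sequential equivalence between $\delta$-type convergence of $\mu_x^{T_k}$ and the vanishing of $\frac1{T_k}\LL^1(\{t\in[0,T_k]:|\Xxu(t,x)-\overline x|\geqslant\eps\})$, and on the compactness (and metrizability) of $\PP(\T^d)$ in the weak topology to pass from statements "along sequences $T_k\uparrow\infty$" to statements "as $T\to\infty$''. Indeed, since $\PP(\T^d)$ is metrizable, a map $T\mapsto\mu^T_x$ converges weakly as $T\to\infty$ if and only if $\mu_x^{T_k}\rightharpoonup\delta_{\overline x}$ for \emph{every} sequence $T_k\uparrow\infty$, and likewise a real-valued function of $T$ has a limit as $T\to\infty$ if and only if it has that limit along every such sequence; this observation is the only point requiring a touch of care, and it is the closest thing to an obstacle here—everything else is bookkeeping.

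For $(b)\Rightarrow(a)$: if $\mu_x^T\rightharpoonup\delta_{\overline x}$ as $T\to\infty$, then in particular $\mu_x^{T_k}\rightharpoonup\delta_{\overline x}$ for every sequence $T_k\uparrow\infty$, so by Definition~\ref{def:wom} the set $\W_u(x)$ contains only $\delta_{\overline x}$, i.e.\ $\W_u(x)=\{\delta_{\overline x}\}$. For $(a)\Rightarrow(b)$: argue by contradiction. If $\mu_x^T\not\rightharpoonup\delta_{\overline x}$, there are $f\in C(\T^d)$, $\eps_0>0$ and a sequence $T_k\uparrow\infty$ with $\big|\int_{\T^d}f\,d\mu_x^{T_k}-f(\overline x)\big|\geqslant\eps_0$ for all $k$. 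By Prokhorov's theorem we may extract a subsequence along which $\mu_x^{T_{k_j}}\rightharpoonup\mu$ for some $\mu\in\PP(\T^d)$; then $\mu\in\W_u(x)=\{\delta_{\overline x\,}\}$, hence $\mu=\delta_{\overline x}$, which forces $\int_{\T^d}f\,d\mu_x^{T_{k_j}}\to f(\overline x)$, contradicting the choice of the sequence. Thus $(b)$ holds.

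Finally, $(b)\Leftrightarrow(c)$ follows from Theorem~\ref{the:delta_lom} together with the sequential criterion above. For $(b)\Rightarrow(c)$: fix $\eps>0$ and take any $T_k\uparrow\infty$; by $(b)$ we have $\mu_x^{T_k}\rightharpoonup\delta_{\overline x}$, so the implication $(a)\Rightarrow(b)$ of Theorem~\ref{the:delta_lom} gives $\frac1{T_k}\LL^1\big(\{t\in[0,T_k]:|\Xxu(t,x)-\overline x|\geqslant\eps\}\big)\to0$, and since the sequence was arbitrary, $(c)$ holds. For $(c)\Rightarrow(b)$: take any $T_k\uparrow\infty$; applying $(c)$ along this sequence is precisely hypothesis $(b)$ of Theorem~\ref{the:delta_lom}, whose implication $(b)\Rightarrow(a)$ yields $\mu_x^{T_k}\rightharpoonup\delta_{\overline x}$; as the sequence was arbitrary, $\mu_x^T\rightharpoonup\delta_{\overline x}$ as $T\to\infty$. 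This closes the cycle and completes the proof.
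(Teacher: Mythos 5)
Your proof is correct and follows essentially the same route as the paper: you derive $(a)\Leftrightarrow(b)$ from the definition of $\W_u(x)$ together with Prokhorov compactness, and $(b)\Leftrightarrow(c)$ by invoking Theorem~\ref{the:delta_lom} sequence by sequence. The paper states these two equivalences in one sentence each; you have simply made explicit the sequential-criterion bookkeeping that the paper leaves implicit.
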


\proof
The equivalence between $(a)$ and $(b)$ follows from the definition of $\W_u(x)$. The equivalence between $(b)$ and $(c)$ is a consequence of Theorem~\ref{the:delta_lom}.
\qed

\begin{remarks}\label{rem:delta_lom}
Notice that property $(c)$ above can be expressed saying that $\overline x$ is the {\em approximate limit} of $\Xxu(t,x)$ as $t\to\infty$. 
\end{remarks}

%
%
Theorem~\ref{the:asympt_critical} suggests that  a point  $\overline x\in\T^d$ satisfying any of the properties in Corollary~\ref{cor:delta_lom} should  be critical for $u$. This is the object of our next result.
\begin{theorem}\label{the:delta_asym}
Let  $u$ be a semiconcave  function on $\T^d$ and let $x, \overline x \in \T^{d}$. If $\W_u(x)=\{\delta_{\overline x}\}$, then $\overline x\in \Crit(u)$.
\end{theorem}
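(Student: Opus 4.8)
The plan is to obtain the statement as an immediate consequence of the two characterisations of invariance already at our disposal. By hypothesis $\W_u(x)=\{\delta_{\overline x}\}$, so $\delta_{\overline x}$ is an occupational measure of $\Xxu$; by Proposition~\ref{lemma2} it is therefore invariant under $\Xxu$. On the other hand, Proposition~\ref{pro:cnes} tells us that a measure $\mu\in\PP(\T^d)$ is invariant under $\Xxu$ if and only if $\supp(\mu)\subset\Crit(u)$. Applying this to $\mu=\delta_{\overline x}$, and observing that $\supp(\delta_{\overline x})=\{\overline x\}$, we conclude that $\overline x\in\Crit(u)$.

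For a self-contained argument closer in spirit to the remark preceding the statement (and avoiding the abstract invariance machinery), I would instead argue by contradiction using Corollary~\ref{cor:delta_lom} together with the integral bound in Corollary~\ref{cor:ggf}. Suppose $\overline x\notin\Crit(u)$, i.e.\ $|p_0(\overline x)|=2\eta>0$. By the lower semicontinuity of $y\mapsto|p_0(y)|$ (Remark~\ref{re:p_0}(b)), there is $\rho>0$ such that $|p_0(y)|\geqslant\eta$ whenever $|y-\overline x|<\rho$. Then, by \eqref{eq:dggf},
\begin{equation*}
2\|u\|_\infty\ \geqslant\ \int_0^\infty \big|p_0\big(\Xxu(t,x)\big)\big|^2\,dt\ \geqslant\ \eta^2\,\LL^1\big(\{t\geqslant 0:\ |\Xxu(t,x)-\overline x|<\rho\}\big),
\end{equation*}
so the set $\{t\geqslant 0:\ |\Xxu(t,x)-\overline x|<\rho\}$ has finite Lebesgue measure. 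But since $\W_u(x)=\{\delta_{\overline x}\}$, Corollary~\ref{cor:delta_lom}(c) applied with $\eps=\rho$ gives $\tfrac1T\LL^1(\{t\in[0,T]:\ |\Xxu(t,x)-\overline x|<\rho\})\to 1$ as $T\to\infty$, hence that set has infinite measure --- a contradiction. Therefore $\overline x\in\Crit(u)$.

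I expect no substantial obstacle here: the result is essentially a corollary of Propositions~\ref{lemma2} and~\ref{pro:cnes}. In the second, more hands-on approach the only point requiring a little care is passing from the lower semicontinuity of $|p_0|$ to a genuine uniform lower bound on a neighbourhood of $\overline x$, and then correctly matching the ``finite measure'' estimate against the ``full density'' conclusion of Corollary~\ref{cor:delta_lom}. I would present the first, one-line argument as the proof and perhaps record the quantitative version as a remark, since it also makes transparent why the hypothesis forces $\overline x$ to be critical.
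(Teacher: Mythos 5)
Your first, one-line argument is correct and is genuinely simpler than the paper's: since $\W_u(x)=\{\delta_{\overline x}\}$, Proposition~\ref{lemma2} gives that $\delta_{\overline x}$ is $\Xxu$-invariant, and the ``only if'' direction of Proposition~\ref{pro:cnes} then yields $\{\overline x\}=\supp(\delta_{\overline x})\subset\Crit(u)$. (In fact one can even short-circuit Proposition~\ref{pro:cnes}: invariance of $\delta_{\overline x}$ means $f(\Xxu(t,\overline x))=f(\overline x)$ for every $f$ and $t$, i.e.\ $\overline x$ is a fixed point of the flow, whence $p_0(\overline x)=\dXxu^+(0,\overline x)=0$.) The paper instead gives a longer, quantitative proof: using Lemma~\ref{lem:abstract_meas2}, the flow-invariance $\W_u(x)=\W_u(\Xxu(t_\eps,x))$ from Theorem~\ref{the:stability}, and a measure overlap argument, it constructs an explicit sequence $T_k\to\infty$ along which simultaneously $\Xxu(T_k,x)\to\overline x$ and $p_0(\Xxu(T_k,x))\to 0$, then concludes by lower semicontinuity of $|p_0|$. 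That construction delivers more information than the bare statement (it exhibits times at which the flow is near $\overline x$ with small minimal superdifferential), which is presumably why it is given; but for the theorem as stated your route via Propositions~\ref{lemma2} and~\ref{pro:cnes} is both valid and noticeably more economical.

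Your second, hands-on argument is also correct, and it is closer in spirit to the paper's but still different: you argue by contradiction, extracting a uniform lower bound $|p_0|\geqslant\eta$ near $\overline x$ from lower semicontinuity, and then play the \emph{finite} total integral $\int_0^\infty|p_0(\Xxu(t,x))|^2\,dt\leqslant 2\|u\|_\infty$ from Corollary~\ref{cor:ggf} against the \emph{full density} of $\{t:|\Xxu(t,x)-\overline x|<\rho\}$ from Corollary~\ref{cor:delta_lom}. That clash of finite measure versus infinite measure is clean and avoids the paper's pigeonhole Lemma~\ref{lem:abstract_meas2} and the constructive bookkeeping with $t_\eps$, $T_\eps$, $S_\eps$. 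The one small care you flagged --- upgrading lower semicontinuity to a uniform local lower bound --- is exactly right and handled correctly.
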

\proof
Fix any $\eps>0$. Since $ \W_u(x)=\{\delta_{\overline x}\}$, Corollary~\ref{cor:delta_lom} ensures that  
\begin{equation}
\label{eq:Geps}
\lim_{t\to\infty}\frac {1}{t}
\LL^{1} \big(  G_{\eps}(t) \big) =1\quad\mbox{where}\quad G_{\eps}(t)= \big\{s \in [0,t]~:~|\Xxu(s,x)-\overline x|<\eps\big\}.
\end{equation}
Consequently, there exists $t_{\eps}\geqslant 1/\eps$ such that $|\Xxu(t_{\eps},x)-\overline x|<\eps$. In fact, $t_{\eps}\in G_{\eps}(t)$ for all $t\geqslant t_{\eps}$.
Moreover, by \eqref{eq:dggf}, the semigroup property of the flow, and Fubini's theorem for any $T>0$ we have that
\begin{eqnarray*}
\frac{1}{T}  \int_{0}^{T} \Big(u\big(\Xxu(t,\Xxu(t_{\eps},x))\big)-u\big(\Xxu(t_{\eps},x)\big)\Big) dt&=&\frac{1}{T} \int_{0}^{T} dt\int_{0}^{t}\big|p_0(\Xxu(s+t_{\eps},x))\big|^2ds
\\
& =&\frac{1}{T} \int_{0}^{T} (T-s)\big|p_0(\Xxu(s+t_{\eps},x))\big|^2ds.
\end{eqnarray*}
Since $\W_u(x)=\W_u\big(\Xxu(t_{\eps},x)\big)$ by Theorem~\ref{the:stability}, our hypothesis forces $\mu^T_{\Xxu(t_{\eps},x)}\rightharpoonup \delta_{\overline x}$ as $T\to\infty$. So,
\begin{equation*}
\lim_{T\to\infty}\frac{1}{T} \int_{0}^{T} (T-s)\big|p_0(\Xxu(s+t_{\eps},x))\big|^2ds=u(\overline x)-u\big(\Xxu(t_{\eps},x)\big)
\leqslant \omega_u(\eps).
\end{equation*}
Therefore, there exists $T_{\eps}>0$ such that, for all $T\geqslant T_{\eps}$
\begin{equation*}
\frac{1}{T} \int_{0}^{T} (T-s)\big|p_0(\Xxu(s+t_{\eps},x))\big|^2ds
\leqslant \omega_u(\eps)+\eps.
\end{equation*}
Now, Lemma~\ref{lem:abstract_meas2} below applied to 
$f(s):=|p_0(\Xxu(s+t_{\eps},x))|^2$ yields
\begin{equation}
\label{eq:om}
\LL^{1} \big(  K_{\eps}(T) \big) \geqslant  \frac T2-1
\quad\mbox{for all}\quad 
T\geqslant T_{\eps},
\end{equation}
where
\begin{equation*}
 K_{\eps}(T)= \big\{s \in [0,T]~:~|p_0(\Xxu(s+t_{\eps},x))|^2\leqslant \omega_u(\eps)+\eps\big\}.
\end{equation*}

\smallskip
Next, we claim that there exists $S_{\eps}\geqslant \max\{t_{\eps},T_{\eps}\}$ such that
\begin{equation}
\label{eq:om_claim}
K_{\eps}(T)\cap \big(G_{\eps}(T)-t_{\eps}\big)
\neq \emptyset \quad\mbox{for all}\quad 
T\geqslant S_{\eps}
\end{equation}
where we set $G_{\eps}(T)-t_{\eps}=\{s-t_{\eps}~:~s\in G_{\eps}(T)\}$.

Indeed, since
\begin{equation*}
\LL^{1} \big(  (G_{\eps}(T)-t_{\eps})\cap [0,T] \big)
\geqslant \LL^{1} \big(G_{\eps}(T)\big)-t_{\eps},
\end{equation*}
we have that $K_{\eps}(T)$ and $ (G_{\eps}(T)-t_{\eps})\cap [0,T]$ are subsets of $[0,T]$ satisfying
\begin{equation*}
\LL^{1} \big(  K_{\eps}(T) \big) + \LL^{1} \big(  (G_{\eps}(T)-t_{\eps})\cap [0,T] \big)\geqslant  \frac T2+\LL^{1} \big(G_{\eps}(T)\big)-\big(1+t_{\eps}\big)
\quad\forall
T\geqslant T_{\eps}.
\end{equation*}
Hence, our claim follows noting that, in view of \eqref{eq:Geps}, the right-hand side of the above inequality can be made strictly larger than $T$ for $T$ large enough.

\smallskip
Finally, appealing to \eqref{eq:om_claim} with $\eps=1/k$, one can construct a sequence 
\begin{equation*}
T_k:=s_{1/k}+t_{1/k}\to\infty\quad\mbox{as}\quad k\to\infty
\end{equation*}
with
\begin{equation*}
\Xxu(T_k,x)\to \overline x\quad\mbox{and}\quad p_0(\Xxu(T_k,x))\to 0 \quad\mbox{as}\quad k\to\infty.
\end{equation*}
It follows that $\overline x\in\Crit(u)$.
 \qed
\begin{lemma}\label{lem:abstract_meas2}
Let $T,C>0$ be fixed and let $f:[0,T]\to[0,\infty)$ be a  Lebesgue measurable function such that
\begin{equation}
\label{eq:abstract_meas2}
\frac 1T\int_0^T (T-s)f(s)ds\leqslant   C.
\end{equation}
Then we have that
\begin{equation}
\label{eq:abstract_meas2_conc}
\LL^{1} \big(  \big\{s \in [0,T]~:~f(s)\leqslant   C\big\}  \big) 
\geqslant \frac T2-1\,.
\end{equation}
\end{lemma}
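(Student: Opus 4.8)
The plan is to argue by contradiction. Suppose that the conclusion \eqref{eq:abstract_meas2_conc} fails, i.e.\ that the ``good set'' $\{s\in[0,T]:f(s)\leqslant C\}$ has measure strictly less than $T/2-1$. Equivalently, the ``bad set'' $B=\{s\in[0,T]:f(s)>C\}$ satisfies $\LL^1(B)>T/2+1$. The idea is that on $B$ the integrand $(T-s)f(s)$ is bounded below by $C(T-s)$, and if $B$ is large it must contain a substantial chunk of the interval $[0,T/2]$, where the weight $T-s$ is at least $T/2$. This should force the weighted average $\frac1T\int_0^T(T-s)f(s)\,ds$ to exceed $C$, contradicting \eqref{eq:abstract_meas2}.

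The key steps, in order, are as follows. First I would discard the zero-measure issues and write $\int_0^T(T-s)f(s)\,ds\geqslant \int_B(T-s)f(s)\,ds\geqslant C\int_B(T-s)\,ds$, using $f\geqslant 0$ on $[0,T]$ and $f>C$ on $B$. Second, I would estimate $\int_B(T-s)\,ds$ from below: since $s\mapsto T-s$ is decreasing, among all measurable subsets of $[0,T]$ of a given measure $m:=\LL^1(B)$, the integral $\int_B(T-s)\,ds$ is minimized when $B$ is an interval of the form $[T-m,T]$; hence $\int_B(T-s)\,ds\geqslant \int_{T-m}^T(T-s)\,ds=m^2/2$. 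Third, plugging in $m=\LL^1(B)>T/2+1$ gives $\frac1T\int_0^T(T-s)f(s)\,ds\geqslant \frac{C}{T}\cdot\frac{m^2}{2}>\frac{C}{T}\cdot\frac{(T/2+1)^2}{2}=\frac{C}{2T}\Big(\frac{T^2}{4}+T+1\Big)>\frac{C}{2T}\cdot\frac{T^2}{2}=\frac{CT}{4}$, which for $T\geqslant 4$ already exceeds $C$; a slightly more careful bookkeeping of the $(T/2+1)^2$ term shows the strict inequality $>C$ holds for all $T>0$, contradicting \eqref{eq:abstract_meas2}. Actually, the cleanest route avoids any restriction on $T$: from $\frac{C}{T}\cdot\frac{m^2}{2}\leqslant C$ we get $m\leqslant\sqrt{2T}$, whereas $\sqrt{2T}\leqslant T/2+1$ for every $T>0$ (square both sides: $2T\leqslant T^2/4+T+1$, i.e.\ $0\leqslant(T/2-1)^2$), so $m\leqslant T/2+1$ need not give the contradiction directly; instead one should set up the rearrangement bound to land exactly on the threshold $T/2-1$ for the good set.

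The main obstacle — really the only subtlety — is getting the constant $T/2-1$ sharp rather than a weaker bound like $c\sqrt{T}$. The honest approach is: let $g=\{s:f(s)\leqslant C\}$ with $\LL^1(g)=\gamma$, so $\LL^1(B)=T-\gamma$, and use the rearrangement inequality $\int_B(T-s)\,ds\geqslant (T-\gamma)^2/2$. Then \eqref{eq:abstract_meas2} gives $C\geqslant \frac1T\cdot C\cdot\frac{(T-\gamma)^2}{2}$, hence $(T-\gamma)^2\leqslant 2T$, i.e.\ $T-\gamma\leqslant\sqrt{2T}$, so $\gamma\geqslant T-\sqrt{2T}$. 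It remains to check $T-\sqrt{2T}\geqslant T/2-1$, which is equivalent to $T/2+1\geqslant\sqrt{2T}$, i.e.\ $(T/2+1)^2\geqslant 2T$, i.e.\ $T^2/4-T+1\geqslant 0$, i.e.\ $(T/2-1)^2\geqslant 0$ — always true. This closes the argument, and the contradiction hypothesis was not even needed; a direct chain suffices. I would present it directly: derive $(T-\gamma)^2\leqslant 2T$ from \eqref{eq:abstract_meas2} via the rearrangement bound, then conclude $\gamma\geqslant T-\sqrt{2T}\geqslant T/2-1$.
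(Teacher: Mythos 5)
Your argument is correct, and it does prove the lemma, but by a different estimate than the paper uses. Both proofs start the same way: restrict the integral in \eqref{eq:abstract_meas2} to the ``bad'' set $B=\{s:f(s)>C\}$, where the integrand is bounded below by $C(T-s)$. The divergence is in how one lower-bounds $\int_B(T-s)\,ds$. The paper writes $\int_B(T-s)\,ds=\tfrac{T^2}{2}-\int_G(T-s)\,ds$ with $G=\{f\leqslant C\}$ and then uses only the crude bound $T-s\leqslant T$, giving $\int_G(T-s)\,ds\leqslant T\,\LL^1(G)$; after cancelling $C$ this lands exactly on $\LL^1(G)\geqslant T/2-1$ in one step. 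You instead invoke the bathtub/rearrangement inequality, $\int_B(T-s)\,ds\geqslant\tfrac{1}{2}\big(\LL^1(B)\big)^2$, which yields the strictly stronger conclusion $\LL^1(G)\geqslant T-\sqrt{2T}$, and you then verify algebraically that $T-\sqrt{2T}\geqslant T/2-1$ (equivalent to $(T/2-1)^2\geqslant 0$). So your route is slightly longer and invokes a rearrangement principle the paper avoids, but it buys a sharper lower bound; the paper's version is shorter and tailored precisely to the constant $T/2-1$ needed downstream. Either is a complete proof. (Minor stylistic point: as you yourself note at the end, the contradiction framing in your opening is unnecessary — the direct chain is cleaner — and the intermediate musings about $T\geqslant 4$ could be dropped entirely.)
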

\proof
Observe that \eqref{eq:abstract_meas2} yields
\begin{eqnarray*}
C&\geqslant& \frac 1T\int_0^T(T-s)f(s) \big(\chi_{_{\{f>   C\}}}(s)+\chi_{_{\{f\leqslant     C\}}}(s)\big)dr
\\
&\geqslant &\frac {  C}T\int_0^T(T-s)\chi_{_{\{f>   C\}}}(s)ds=\frac {  C}T\Big(\frac{T^2}2-\int_0^T(T-s)\chi_{_{\{f\leqslant     C\}}}(s)ds\Big)
\\
&\geqslant &\frac {  C}T\Big(\frac{T^2}2-T\LL^{1} \big(  \big\{r \in [0,T]~:~f(s)\leqslant    C\big\}  \big)\Big)
\end{eqnarray*}
The conclusion follows.
\qed

\smallskip
From Theorem~\ref{the:delta_asym} we can easily derive a characterisation of  the critical set. 
\begin{corollary}\label{cor:om}
Let  $u$ be a semiconcave  function on $\T^d$ and 
let $x\in\T^d$. Then
\begin{enumerate}
\item[(a)] $x\in \Crit (u)$ if and only if $ \W_u(x)=\{\delta_x\}$.
\end{enumerate} 
Moreover, for any $t\geqslant 0$,
\begin{enumerate}
\item[(b)] $\Xxu(t,x)\in \Crit (u)$ if and only if $ \W_u(x)=\{\delta_{\Xxu(t,x)}\}.$
\end{enumerate} 
\end{corollary}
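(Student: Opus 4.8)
The plan is to obtain Corollary~\ref{cor:om} as a straightforward consequence of Theorem~\ref{the:delta_asym}, the stability Theorem~\ref{the:stability}, and the stationarity of the semi-flow at critical points; no new estimate is required, so the proof amounts to assembling pieces already in place.

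For part~(a) I would handle the two implications separately. For the ``only if'' direction, suppose $x\in\Crit(u)$, so that $0\in D^+u(x)$. Then the constant curve $t\mapsto x$ solves the differential inclusion in \eqref{flow} with initial datum $x$, and by the uniqueness asserted for \eqref{flow} it must coincide with the generalised gradient flow, i.e.\ $\Xxu(t,x)\equiv x$ for all $t\geqslant 0$ --- this is precisely the observation used in the proof of Proposition~\ref{pro:cnes}. Hence, for every $T>0$, the individual occupational measure satisfies $\int_{\T^d} f\,d\mu^T_x = \frac{1}{T}\int_0^T f(x)\,dt = f(x)$ for all $f\in C(\T^d)$, so $\mu^T_x=\delta_x$ for every $T$ and therefore $\W_u(x)=\{\delta_x\}$. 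For the ``if'' direction I would simply invoke Theorem~\ref{the:delta_asym} with $\overline x = x$: if $\W_u(x)=\{\delta_x\}$ then $x\in\Crit(u)$.

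For part~(b), fix $t\geqslant0$ and put $y:=\Xxu(t,x)$. By Theorem~\ref{the:stability} one has $\W_u(x)=\W_u(y)$, so the condition $\W_u(x)=\{\delta_{\Xxu(t,x)}\}$ is equivalent to $\W_u(y)=\{\delta_y\}$; applying part~(a) at the point $y$ then yields $\W_u(y)=\{\delta_y\}\iff y\in\Crit(u)$, which is the claim. I do not expect any genuine obstacle here: the analytic content lives entirely in Theorem~\ref{the:delta_asym}. The only points deserving attention are the appeal to uniqueness for \eqref{flow} to pin down that the semi-flow is stationary at critical points, and the decision to reduce part~(b) to part~(a) via the stability theorem rather than re-running the argument of Theorem~\ref{the:delta_asym} at the point $\Xxu(t,x)$.
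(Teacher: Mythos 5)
Your proof is correct and follows the same route as the paper: the forward direction of (a) via stationarity of the flow at critical points (uniqueness for \eqref{flow}), the converse via Theorem~\ref{the:delta_asym}, and (b) by combining (a) with the stability Theorem~\ref{the:stability}.
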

\proof

$(a)$: the fact that $\W_u(x)=\{\delta_x\}$ for all $x\in \Crit (u)$ is obvious since $\Xxu(t,x)\equiv x$ in this case. The converse is an immediate consequence of Theorem~\ref{the:delta_asym}. 

\smallskip\noindent
$(b)$: the conclusion follows 
by combining $(a)$ and Theorem~\ref{the:stability}.
\qed
\section{Gradient flows and Hamilton-Jacobi equations}
\label{sec:solutions}
Given  a 
 function $V\in C^2(\T^d)$,  consider the  Lagrangean
\begin{equation*}
L(x,v)=\frac{1}{2}|v|^{2} - V(x)\quad (x \in \T^{d}, v\in\R^d)
\end{equation*}
with the associated Hamiltonian  
\begin{equation}\label{eq:H}
H(x,p)=\frac{1}{2}|p|^{2} + V(x)\quad (x \in \T^{d}, p\in\R^d).
\end{equation}
It is well known (see \cite{Fathi1997_1}) that there exists a unique constant $\mane \in \R$ such that the Hamilton-Jacobi equation
\begin{equation}\label{critical}
H(x, Du(x)) = \mane \quad (x \in \T^{d})
\end{equation}
has a semiconcave viscosity solution. Such a constant is known as the {\it critical constant} of $H$ and \eqref{critical} is called the {\it critical equation}. We recall that 
a continuous function $u: \T^d \to \R$ is a viscosity sub-solution of \eqref{critical}  if and only if 
\begin{equation}\label{eq:sub}
H(x, p) \leq \mane \quad \forall x \in \T^{d}\,,\;\forall p\in D^+u(x),
\end{equation}
  a viscosity super-solution of \eqref{critical}  if and only if 
\begin{equation}\label{eq:super}
H(x, p) \geq \mane \quad \forall x \in \T^{d}\,,\;\forall p\in D^-u(x),
\end{equation}
and a viscosity solution of \eqref{critical}  if and only if \eqref{eq:sub} and \eqref{eq:super} are both satisfied. See  \cite{Crandall_Evans_Lions1984} for equivalent definitions of viscosity solutions and related properties.

\begin{remarks}\label{rmk:singularity}
Well-known properties of semiconcave functions, together with \eqref{eq:sub} and \eqref{eq:super}, provide a simple  criterion to detect the singularities of a semiconcave solution $u$ of equation \eqref{critical}: $x\in\Sing(u)$ if and only if
\begin{equation}
\label{eq:singularity}
\min_{p\in D^+u(x)}H(x,p)<0
\end{equation}
(see \cite[Proposition~5.4.3]{Cannarsa_Sinestrari_book}).
\end{remarks}


\smallskip
The following facts are well known in weak KAM theory (see, e.g., \cite{Fathi_book,Cannarsa_Sinestrari_book,DaviniFathiIturriagaZavidovique2018}).
\begin{lemma}\label{lemma0}
The critical constant of $H$ in \eqref{eq:H} is given by
\begin{equation}
\label{eq:maneV}
\maneV=\max_{x \in \T^{d}} V(x).
\end{equation}
Moreover, 
for  any semiconcave  solution
 $u: \T^{d} \to \R$ of 
\begin{equation}
\label{eq:MS}
\frac{1}{2}|Du(x)|^{2} + V(x)=\maneV \quad (x \in \T^{d}).
\end{equation}
we have that 
\begin{equation}
\label{eq:argmax}
\argmax_{x\in \T^{d}}  V(x) = {\Crit}^{*}(u)
\end{equation}	
where $\Crit^{*}(u) = \Crit(u) \backslash \Sing(u)$ denotes the regular critical set of $u$. 
\end{lemma}
In weak KAM theory, the set in \eqref{eq:argmax} is called the {\em projected Aubry set}. 

\smallskip
For  solutions of \eqref{eq:MS} we  can develop a more detailed analysis of  the long-time behavior of the associated generalized gradient flow $\Xxu$ than the one given by Theorem~\ref{the:asympt_critical}. 
A fundamental role in such an analysis will be played by  singularities of solutions. In particular, it becomes essential to know whether $\Sing(u)$ is invariant under the generalized gradient flow. One usually refers to such an invariance property as the global propagation of singularities. 

In order simplify the notation and without loss of generality, hereafter {\em we will include the critical constant $\maneV$  in the potential $V$}. Thus, we will be concerned with  semiconcave solutions $u$  of the stationary equation 
\begin{equation}\label{HJ}
	\frac 12|Du(x)|^2+V(x)=0 \qquad(x\in\T^d).
\end{equation}
For any fixed $x\in\T^d$, we recall that $\Xxu(t, x)$ is the semi-flow associated with the  differential inclusion
\begin{equation}\label{GC}
\begin{cases}
\dXx(t, x) \in D^{+}u(\Xx(t, x)), \quad t\in[0,\infty)\mbox{ a.e.}
\\
\Xx(0, x)=x
\end{cases}
\end{equation}
While it has long been known that singularities propagate {\em locally} along  such a flow (\cite{Albano_Cannarsa2002}), the first global propagation result for the special case of the distance function on a Riemannian manifold was obtained in \cite{ACNS2013}. In \cite{Cannarsa_Mazzola_Sinestrari2015}, the global propagation of singularities was established for solutions of time dependent eikonal-type equations. Subsequently, in \cite{Albano2016_1}, the first author of this paper gave the invariance property for solutions of a class of time dependent Hamilton-Jacobi equations with variable coefficients. Applied to the present context, the main result of \cite{Albano2016_1} reads as follows:
\begin{theorem} \label{t:mono}
Let $V\in C^2(\T^d)$ and let $u$ be a semiconcave solution of \eqref{HJ}. If $x_0 \in \Sing(u)$, then 
\begin{equation}\label{SA}
\Xxu (t,x_0)\in \Sing (u)
\qquad\forall\,t\geqslant 0.
\end{equation} 
 \end{theorem}
Unfortunately, the proof of \cite{Albano2016_1} is incomplete. More precisely, 
the idea of the proof proposed in \cite{Albano2016_1} consists in showing that the Hamiltonian evaluated along an approximated gradient flow (that is, an {\em approximate energy}) satisfies a certain differential inequality. The conclusion should follow by passing to the limit and this last step contains a gap. In the present paper, not only do we  complete this argument but we introduce several simplifications of the original proof as well. Specifically:
\begin{itemize}
\item we make a different use of Alexandroff's theorem to avoid appealing to fundamental solutions,
\item we obtain the final weighted energy estimate by means of a non smooth Gronwall-type argument.
\end{itemize}
\smallskip

Before starting the fairly long proof of Theorem~\ref{t:mono}, let us introduce further notions which play a crucial role in the reasoning, namely the critical time function, the cut time function, and the cut locus. 

\smallskip
The {\em critical time function} for the generalized gradient flow of a semiconcave function  $u: \T^{d} \to \R$ is defined as
\[
\crit(x)=\inf\big\{t\geqslant 0~:~\Xxu(t,x)\in\Crit(u)\big\}\quad  (x\in\T^d)\,.
\]
Observe that $\crit(x)\in[0,\infty]$. The following proposition establishes a dichotomy that  applies to all the points with a finite critical time.
\begin{proposition}\label{pro:finite_critical}
 Let 
 $u: \T^{d} \to \R$ be a semiconcave  solution of \eqref{HJ} and let  $x\in\T^d$ be such that $\crit(x)<\infty$. Then the following holds true:
\begin{itemize}
	\item[$(a)$] $\Xxu(\crit(x),x)\in\Crit(u)$,
	\item[$(b)$] $V\big(\Xxu(\crit(x),x)\big)=0$ if and only if $\Xxu( \crit(x),x)\in \Crit^*(u)$,
	\item[$(c)$] $V\big(\Xxu(\crit(x),x)\big)<0$ if and only if $\Xxu( \crit(x),x)\in \Sing(u)$.
\end{itemize}
\end{proposition}
\proof
Point $(a)$ follows from the fact that $\Crit(u)$ is closed.
Point $(b)$  is a direct consequence of \eqref{eq:argmax}. In order to prove $(c)$, observe that, in view of  $(a)$ and $(b)$,
$$V\big(\Xxu(\crit(x),x)\big)<0\quad\Longleftrightarrow\quad \Xxu(\crit(x),x)\in\Crit(u)\setminus\Crit{^*}(u).
\eqno
\square$$
\begin{remarks}\label{rmk:finite_critical}
Notice that, if $\crit(x)<\infty$ and $V\big(\Xxu(\crit(x),x)\big)=0$, then $u$ is differentiable at $\Xxu(t,x)$ for all $t\geqslant 0$.
Indeed, $\Xxu(t,x)=\Xxu(\crit(x),x)\in \Crit^*(u)$ for all $t\geqslant\crit(x)$ by $(b)$ above. Moreover, there can be no time $t_0\in[0,\crit(x))$ such that $\Xxu(t_0,x)\in\Sing(u)$, for otherwise the semi-flow would  stay in the singular set for all $t\geqslant t_0$ by  Theorem~\ref{t:mono}. 
\end{remarks}

The \emph{cut time function} of $u$ is the map $\cut:\T^d\to[0,\infty]$ defined as
\[
\cut(x) = \sup \big\{ t \geqslant 0~:~\exists \, \gamma \in C^1([0, t], \T^d), \, \gamma(0) = x, \, u(\gamma(t)) - u(x) = A_t(x, \gamma(t)) \big\}\,,
\]
where $A_t(x,y)$ is the {\em action functional} given by
\begin{align*}
	A_t(x,y)=\inf_{\xi\in\Gamma^t_{x,y}}\int^t_0L(\xi(s),\dot{\xi}(s))\ ds,\quad t>0,\ x,y\in \T^d,
\end{align*}
with 
\begin{align*}
\Gamma^t_{x,y}=\{\xi:[0,t]\to \T^d~:~\,\xi \text{ is absolutely continuous,}\;\xi(0)=x,\; \xi(t)=y\}.
\end{align*}
\begin{remarks}\label{rmk:cut_set}
The cut time function may well be infinite. For example, if $x\in\Crit^*(u)$, then  owing to \eqref{eq:argmax} we have that the constant curve $\gamma(s)\equiv x$ 
satisfies 
\[
u(\gamma(t)) - u(x) = A_t(x, \gamma(t))=0\qquad\forall t\geqslant 0.
\] 
Therefore, $\cut(x)=\infty$ for all $x\in\Crit^*(u)$.
\end{remarks}
\begin{definition}\label{defn:cut_set}
The set 
\[
\Cut\,(u)=\{x\in \T^d~:~\cut(x)=0\}
\]
is called the \emph{cut locus} of $u$.
\end{definition}
\noindent
In view of Remark~\ref{rmk:cut_set} we have that 
\begin{equation}
\label{eq:crit_cut}
\Crit\!\!\,^*(u)\cap\Cut(u)=\varnothing.
\end{equation}
Another important property of the cut locus, obtained in \cite[Theorem~5.3]{Cannarsa_Cheng_Hong_Wang2024}, is that $\Cut(u)$ is invariant under the generalized gradient flow. In fact, we have that
\begin{equation}
\label{eq:cut_set}
\begin{cases}
 (a)
 &
 \Sing\,(u)\subset\Cut\,(u)
 \vspace{.1cm}
 \\
 (b)
 &
 x_0\in\Cut\,(u)\quad\Longrightarrow\quad \Xxu(t,x_0)\in\Cut\,(u)\quad \forall t\geqslant 0.
\end{cases}
\end{equation}
 
 \smallskip
 We can now give the proof of Theorem~\ref{t:mono}.
\begin{proof}[Proof of Theorem \ref{t:mono}]
We observe that the conclusion is trivial if $x_0\in\Crit(u)$. Moreover, for $x_0\not\in\Crit(u)$ it suffices to show that, for any $T_0\in [0,\crit(x_0))$, 
\begin{equation}
\label{eq:conclusion1}
\Xxu (t,x_0)\in \Sing (u)\qquad\forall t\in [0,T_0].
\end{equation}
Indeed, this  is obvious if $\crit(x_0)=\infty$. When $\crit(x_0)<\infty$, \eqref{eq:cut_set} and  Proposition~\ref{pro:finite_critical} ensure that
$
\Xxu\big(\crit(x_0),x_0\big)\in \Crit (u)\cap\Cut(u). 
$
Then $\Xxu\big(\crit(x_0),x_0\big)\in\Sing (u)$
by \eqref{eq:crit_cut}. 

We will prove \eqref{eq:conclusion1} arguing by contradiction. Suppose there exists   $T_0\in [0,\crit(x))$  such that 
\begin{equation}
\label{eq:assurdo}
\big\{ t\in[0,T_0]~:~\Xx(t, x_0 )\notin \Sing (u)\big\}\neq\varnothing
\end{equation}
and set
\begin{equation}
\label{eq:T}
T=\inf \{ t\in[0,T_0]~:~\Xx(t, x_0 )\notin \Sing (u)\}.
\end{equation}
Notice that, if $\Xx(T, x_0 )\in \Sing (u)$, then we immediately get a contradiction. Indeed,  either $T=T_0$ in contrast with \eqref{eq:assurdo}, or $T<T_0$ and  the aforementioned local result from \cite{Albano_Cannarsa2002} ensures the propagation of singularities on a right neighbourhood of $T$, contradicting \eqref{eq:T}.  
So, suppose $\Xx(T, x_0 )\notin \Sing (u)$ and define 
$$
H[t]=\min_{p\in D^+u(\Xx (t,x_0))} \Big\{\frac 12 |p|^2+V(\Xxu (t,x_0))\Big\}\,, \quad  t\in[0,T].
$$
For later use,  let us  observe that, 
without loss of generality, we may suppose that 
  \begin{equation}\label{-1}
  -1<H[t]\leqslant   0,\qquad \forall t\in [0 ,T].
  \end{equation}
 Indeed, $H[t]\leqslant 0$ because $u$ is a solution of  \eqref{HJ}. Moreover, since $u$ is Lipschitz and $V$ is bounded, $H[t]$ is bounded below. So, to ensure \eqref{-1} it sufficies to divide $H[t]$ by a sufficiently large constant.
More importantly,  on account of Remark~\ref{rmk:singularity} we have that 
\begin{equation}
\label{eq:assurdo2}
H[t]<0\qquad\forall t\in [0,T)
\end{equation}
because $ \Xx (t)\in \Sing (u)$ for all such $t$'s. 
Since we are assuming $\Xx(T, x_0 )\notin \Sing (u)$, we also have that $H[T]=0$. In fact,
the rest of the proof consists of showing that $H[T]<0$. 

\smallskip
Hereafter, we will  abbreviate the notation as follows:
$$
\begin{cases}
\Xx (t)=\Xxu (t,x_0)
\\ 
 V[t]=V(\Xxu (t,x_0))
 \\
 D V[t]=DV(\Xxu (t,x_0))
\\
\mathbf{p}[t]:=p_0( \Xxu (t,x_0))
\end{cases}
\qquad(t\in[0,T])
$$

\medskip
In order to bound the function $H[t]$ it seems  natural to differentiate equation \eqref{HJ} along the generalized gradient flow of $u$. For this purpose,
a preliminary formal computation shows that   $
D^2u(x)Du(x)+DV(x)=0. 
$
The next step of the proof of Theorem \ref{t:mono} consists of a rigorous derivation of the above identity, which is the object of the following lemma.
 \begin{lemma}\label{lem:D^2}
 Let $u$ be a semiconcave solution of \eqref{HJ}. Then
\begin{equation}\label{nucleo}
	D^2 u(x)Du(x)=-DV(x),\qquad x\in \T^d\quad \text{ a.e.} 
\end{equation}  
\end{lemma}
\begin{proof}
Let $x\in \T^d$ be a point where $u$ is twice differentiable and let $\xi$ be an arbitrary unit vector. Then, we have that 
\begin{equation}\label{eq:dis1}
\frac 12 |p_t|^2 +V(x+t\xi)\leqslant  0=\frac 12 |Du(x)|^2+V(x), 	 
\end{equation}
where $t>0$ and $p_t\in D^+u(x+t\xi)$ (in the first inequality we are using the fact that $u$ is a viscosity solution of \eqref{HJ}). 
Hence, the convexity inequality 
$$
|p_t|^2\geqslant   |Du(x)|^2+2 Du(x)\cdot (p_t-Du(x)), 
$$
and \eqref{eq:dis1} yield  
$$
\frac 1t Du(x)\cdot (p_t-Du(x))+\frac{V(x+t\xi)-V(x)}{t}\leqslant  0 . 
$$
Then, due to \eqref{eq:2diff}, we deduce that 
$$
D^2u(x)Du(x)\cdot \xi+\frac{V(x+t\xi)-V(x)}{t}\leqslant  o(1),
$$
as $t\to 0$. Taking the limit as $t\to 0$ in the formula above we get 
$$
[D^2u(x)Du(x)+DV(x)]\cdot \xi\leqslant  0, 
$$
and, since $\xi$ is an arbitrary unit vector, we conclude that \eqref{nucleo} holds. 
\end{proof}

\smallskip
We can now proceed with the proof of Theorem~\ref{t:mono}. Since 
$
0\notin D^+u( \Xx (t))
$
for every $t\in [0 , T]$,
 there exists an open subset of $\T^d$, say $\mathcal{U}$, such that   
$$
\mathcal{U}\cap\Crit(u)=\varnothing\quad\mbox{and}\quad \Xx (t)\in \mathcal{U},\quad \forall t\in [0,T].  
$$
Hence, we may assume that 
\begin{equation}\label{eq:signv} 
\forall x\in \mathcal{U}\qquad V(x)\leqslant  -M<0, 
	\end{equation} 
	for some $M>0$. 
	We define 
\begin{equation*}
W(x) = \frac{Du(x)}{\left |Du(x) \right |}\,,\quad
\Pi_x= W(x) \otimes W(x)\qquad   (x\in \mathcal{U}\mbox{ a.e.})
\end{equation*}
where we have used the standard notation 
$p \otimes q$ to denote the tensor product of two vectors $p,q\in\R^d$. 
Notice that $(p \otimes q)_{ij}=p_i q_j$ 
and $(p \otimes q)\,w= (q\cdot w)\,p$ for any $w\in\R^d$.
\begin{lemma}\label{lma:Bu}
For a.e.  $x\in \mathcal{U}$ we have  that  
\begin{equation}
\label{eq:upperbound2}
D^{2}u(x)\,  \xi \cdot \xi \leqslant  B_C(x)\; \xi \cdot \xi \qquad\forall \xi\in \R^d,
\end{equation}
with
\begin{multline*}
B_C(x)\\= C\Big(I+ \frac{Du(x) \otimes Du(x)}{ 2V(x)}\Big) + \frac{Du(x) \otimes D V(x)}{V(x)} 
+ \frac{Du(x) \cdot D V(x)}{ 4V^2(x)}\big(Du(x) \otimes Du(x)\big)\,,
\end{multline*}
where $C > 0$ is any semiconcavity constant for $u$.
\end{lemma}
\proof
Let $C > 0$ be a semiconcavity constant for $u$. For a.e. $x\in \mathcal{U}$ we have that
\begin{multline}\label{Eat}
D^{2}u(x)\,  \xi \cdot \xi =D^{2} u(x) \big(( I - \Pi_x) \xi + \Pi_x \xi\big)\cdot \big(( I - \Pi_x) \xi + \Pi_x \xi\big)
\\
= \underbrace{D^{2} u (x)( I - \Pi_x) \xi \cdot ( I - \Pi_x) \xi}_{(1)} + \underbrace{2 D^{2}u(x)( I - \Pi_x) \xi \cdot \Pi_x\xi}_{(2)} + \underbrace{D^{2}u(x) \Pi_x \xi \cdot \Pi_x \xi}_{(3)}, 
\end{multline}
for every $\xi \in \R^d$. 
Estimating each term of the sum, we obtain 
\begin{equation*}
(1) \leqslant   C |( I - \Pi_x ) \xi |^2 = C (I - \Pi_x )\xi\cdot\xi = C\big( | \xi |^2 - (W(x) \cdot \xi)^2\big).
\end{equation*}
Owing to Lemma~\ref{lem:D^2} we have 
\begin{align*}
(2) &= -2\;  \frac{W(x) \cdot \xi}{\left |Du(x)\right |}\;D V(x)\cdot \Big(\xi - \big(W(x) \cdot \xi\big)\,W(x)\Big)  
\\
&= -2 \;\frac{W(x) \cdot \xi}{\left |Du(x) \right |}\;\big(D V(x) \cdot \xi\big) + 2\; \frac{\big(W(x) \cdot \xi\big)^2}{\left |Du(x) \right |}\;\big( D V(x) \cdot W(x)\big)
\end{align*}
and
\begin{equation*}
(3) = - \; \frac{\big(W(x) \cdot \xi\big)^2}{\left |Du(x) \right |}\;\big( D V(x) \cdot W(x)\big) . 
\end{equation*}
Substituting the above estimates in \eqref{Eat} we deduce that
\begin{align*}
D&^{2}u(x) \, \xi \cdot \xi 
\\&\leqslant   C\big( | \xi |^2 - (W(x)\cdot \xi)^2\big)   -2 \;\frac{W(x) \cdot \xi}{\left |Du(x) \right |}\;\big(D V(x) \cdot \xi\big) +  \; \frac{\big(W(x) \cdot \xi\big)^2}{\left |Du(x) \right |}\;\big( D V(x) \cdot W(x)\big) .
\end{align*}
Observing that $|Du(x)|^2=-2V(x)$ for a.e. $x\in \T^d$ in view of  \eqref{HJ}, we have that
\begin{multline*}
C\Big(I- \frac{Du(x) \otimes Du(x)}{ |Du(x)|^2}\Big) -2 \frac{Du(x) \otimes D V(x)}{|Du(x)|^2} 
+ \frac{Du(x) \cdot D V(x)}{ |Du(x)|^4}\big(Du(x) \otimes Du(x)\big)
\\= C\Big(I+ \frac{Du(x) \otimes Du(x)}{ 2V(x)}\Big) + \frac{Du(x) \otimes D V(x)}{V(x)} 
+ \frac{Du(x) \cdot D V(x)}{ 4V^2(x)}\big(Du(x) \otimes Du(x)\big)\,.
\end{multline*}
The conclusion follows.
\qed
  
  \medskip
  Now, we need a smooth approximation of $H[t]$. For this purpose, we use a standard mollifier, that is,
  a nonnegative smooth function  $\eta $ supported in the unit ball $B_1(0)$, with $\int \eta \; dx =1$, and set 
  for any $m\in\N,\,m\geqslant1,$
  $$
  \eta_m(x)=m^d \eta (mx),\qquad x\in \R^d. 
  $$  
  Then, identifying $u$ with its periodic extension to $\R^d$, we define  
 \begin{equation*}
u_m (x):=( \eta_m* u) (x)\qquad(x\in\T^d)  .
 \end{equation*} 
We have that 
\begin{equation}
\label{eq:sup_um}
|Du_m(x)|\leqslant   Lip(u)\qquad(x\in \T^d),
\end{equation}
where $Lip(u)$ is the Lipschitz seminorm of $u$, as well as
\begin{equation}
\label{eq:conv_um}
u_m\to u \quad\mbox{uniformly on}\quad\T^d\,.
\end{equation}

Now, let $\Xx^m_t=\Xx^m(t)$ be the solution of the differential equation
\begin{equation*}
	\begin{cases}
		\dXx^m_t = Du_m(\Xx^m_t), \quad t\in[0,\infty)
		\\
		\Xx^m_0=x_0. 
	\end{cases}
\end{equation*}
By the Ascoli-Arzel\`a   theorem,  up to a subsequence, 
we have that $\Xx^m_t$ converges uniformly on n$[0,T]$ to a limit, say $\Xx^\infty_t$.  Furthermore, since $u$ is a semiconcave function, $\Xx^\infty_t$ is the unique solution of \eqref{GC}. Then,  $\Xx (t)=\Xx^\infty_t$.
Set  
\begin{equation*}
	H_m(t) = \frac{1}{2}|D u_m(\Xx^m_t)|^2  + V(\Xx^m_t), \quad t\in[0, T].
\end{equation*}
Then, we have that
\begin{equation}\label{diffineq0}
	\frac d{dt}H_m(t) =\;D^2 u_m(\Xx^m_t) D u_m(\Xx^m_t) \cdot D u_m(\Xx^m_t) + D V(\Xx^m_t)\cdot D u_m(\Xx^m_t).  
\end{equation}
We want to estimate the right-hand side of \eqref{diffineq0}.
\begin{lemma}\label{lma:lemma3}
For all $m\geqslant 1$ we have that
\begin{equation}
\label{eq:D2um}
D^2u_m(x)\leqslant   (\eta_m*B_C)(x)\qquad(x\in \T^d).
\end{equation}
\end{lemma}
\proof
Defining for every  $f:\T^d\to\R^d$,  $\xi \in \R^d$, and  $t>0$ 
\begin{equation*}
\tau_{t,\xi}f(x)=\frac{f(x+t\xi)-f(x)}t\qquad(x\in \T^d),
\end{equation*}
we have that
 \begin{equation*}
\tau_{t,\xi}Du_m(x)\cdot\xi=\int\eta_m(y)\big(\tau_{t,\xi}Du(x-y)\cdot \xi\big)\, dy\qquad(x\in \T^d).
\end{equation*}
 Observe that the semiconcavity of $u$ ensures that 
 \begin{equation*}
\eta_m(y)\big(\tau_{t,\xi}Du(x-y)\cdot \xi\big)\leqslant   C|\xi|^2\eta_m(y)\qquad(y\in \T^d\mbox{ a.e.})
\end{equation*}
where $C\geq0$ is any semiconcavity constant for $u$. Therefore, Fatou's lemma yields
\begin{align*}
D^2u_m(x)\xi\cdot\xi&=\lim_{t\downarrow 0}\tau_{t,\xi}Du_m(x)\cdot\xi
\leqslant   \int \limsup_{t\downarrow 0}\eta_m(y)\big(\tau_{t,\xi}Du(x-y)\cdot\xi\big)\,dy
\\
&= \int\eta_m(y)D^2u(x-y)\xi\cdot\xi\,dy\leqslant    (\eta_m*B_C)(x)\xi\cdot\xi\
\end{align*}
for all $x\in \T^d$, thus proving the conclusion \eqref{eq:D2um}.
\qed

\medskip
We now resume the proof of Theorem~\ref{t:mono}. From \eqref{diffineq0} and \eqref{eq:D2um} it follows that
\begin{equation}\label{diffineq}
	\frac d{dt}H_m(t) 
	\leqslant    \;(\eta_m * B_C)(\Xx^m_t) D u_m(\Xx^m_t) \cdot D u_m(\Xx^m_t) + D V(\Xx^m_t) \cdot D u_m(\Xx^m_t ).  
\end{equation} 
In order  to take the limit in \eqref{diffineq} we pass to an integral inequality. 
For any $t\in [0,T)$, by integrating between $t+h$ and $t$ the above estimate (with $0<h<T-t$) we get 
\begin{multline}\label{intineq}
	H_m(t+h)  \hspace{-0,5mm}-H_m(t) 
	\\
	\leqslant    \int_t^{t+h}\; \Big((\eta_m * B_C)(\Xx^m_s) D u_m(\Xx^m_s) \cdot D u_m(\Xx^m_s) + D V(\Xx^m_s) \cdot D u_m(\Xx^m_s)\Big)\; ds .   
\end{multline}

Our next step is the identification of the limit of $Du_m(\Xx_t^m)$ as $m\to\infty$.
\begin{lemma}
	For a.e. $t\in [0,T]$, 
	\begin{equation}\label{convvel}
\lim_{m\to \infty}		Du_m(\Xx_t^m)=\mathbf{p}[t]. 
	\end{equation}
\end{lemma}
\begin{proof}
We observe that, since $|Du_m (\Xx^m_t)|\leqslant   Lip (u)$,  there exists a subsequence, still denoted by $|Du_m(\Xx^m_t)|^2$, which weakly converges in $L^2$ to a function, say $\ell =\ell (t)$. Similarly, up to a subsequence, $Du_m(\Xx^m_t)$ weakly converges in $L^2$ to a map $q(t)$.     
For every $0\leqslant   t_0\leqslant   t_1\leqslant   T$, we have that 
	$$
	u_m(\Xx^m_{t_1})-	u_m(\Xx^m_{t_0})=\int_{t_0}^{t_1} |Du_m (\Xx^m_{t})|^2\; dt . 
	$$
	Taking the limit, as $m\to \infty$ in the above identity and using that $u_m$ and $\Xx^m_{t}$ converge uniformly to $u$ and $\Xx (t)$ respectively, we find 
	$$
\int_{t_0}^{t_1} \ell (t)\; dt=	\lim_{m\to \infty } \int_{t_0}^{t_1} |Du_m (\Xx^m_{t})|^2\; dt  =u(\Xx (t_1))- u(\Xx (t_0))=\int_{t_0}^{t_1} |\mathbf{p}[t]|^2\; dt . 
	$$ 
	
 We claim that $\ell (t)=| \mathbf{p}[t]|^2$, for a.e. $t\in [0,T]$. Indeed, due to the arbitrariness of $t_0<t_1$, we have that  
 $$
 F(t):=\int_0^t [\ell (s)-| \mathbf{p}[s]|^2]\; ds 
 $$
satisfies 
$$
F(t_1)=\int_0^{t_1}  [\ell (s)-| \mathbf{p}[s]|^2]\; ds =F(t_0)+\underbrace{\int_{t_0}^{t_1} [\ell (s)-| \mathbf{p}[s]|^2]\; ds }_{=0} ,  
$$	
i.e.,
$
F\equiv F(0)=0
$. 
Then, taking the derivative of $F$, we conclude that 
$$
\ell (t)=| \mathbf{p}[t]|^2,\qquad \text{for a.e. }t\in [0,T], 
$$
i.e. 
\begin{equation}\label{dump} 
\lim_{m\to\infty} |Du_m (\Xx^m_t)|^2= | \mathbf{p}[t]|^2,\qquad \text{for a.e. }t\in [0,T]. 
\end{equation} 

Similarly, we claim that, for a.e. $t\in [0,T]$, 
$
\mathbf{p}[t]=q(t), 
$
so that $Du_m(\Xx^m_t)$ converges weakly in $L^2$ to $\mathbf{p}[t]$. Indeed, for every $0\leqslant   t_0\leqslant   t_1\leqslant   T$, we have that 
	$$
\Xx^m_{t_1}-\Xx^m_{t_0}=\int_{t_0}^{t_1} Du_m (\Xx^m_{t})\; dt . 
	$$
	Taking the limit, as $m\to \infty$ in the above identity 
we have  that 
$$
\mathbf{x}(t_1)-\mathbf{x}(t_0)=\int_{t_0}^{t_1} q(t)\; dt 
$$
and, due to $\mathbf{x}(t_1)-\mathbf{x}(t_0)=\int_{t_0}^{t_1} \mathbf{p}[t]\; dt$ and the arbitrariness of $t_0$ and $t_1$, we deduce  our claim.  
 
Finally, since
$$
\int_0^T | Du_m(\Xx^m_t)-\mathbf{p}[t]|^2\; dt =\int_0^T\Big( | Du_m(\Xx^m_t)|^2 -2     Du_m(\Xx^m_t)\cdot  \mathbf{p}[t]+ |\mathbf{p}[t]|^2\Big)\; dt , 
$$
taking the limit in the above identity as $m\to \infty$, by \eqref{dump} and the fact that $Du_m(\Xx^m_t)$ weakly converges in $L^2$ to $\mathbf{p}[t]$  we get
$$
\lim_{m\to \infty} \int_0^T | Du_m(\Xx^m_t)-\mathbf{p}[t]|^2\; dt =2 \int_0^T\{ |\mathbf{p}[t]|^2 -   \mathbf{p}[t]\cdot  \mathbf{p}[t]\}\; dt =0.
$$
 So, we conclude that $\lim_{m\to \infty} Du_m(\Xx^m_t)=\mathbf{p}[t]$ for a.e. $t\in [0,T]$.   
\end{proof}

Now, observe that 
\begin{align*}
&(\eta_m * B_C) (\Xx^m_t) \; D u_m(\Xx^m_t) \cdot D u_m(\Xx^m_t)
\\
&= \int \eta_m (\Xx^m_t -y )
\Big \{ 
C\Big( | D u_m(\Xx^m_t) |^2+\frac{\big(Du(y) \cdot D u_m(\Xx^m_t) \big)^2 }{ 2V(y)}\Big) 
\\
&+ \frac{\big(Du(y) \cdot D u_m(\Xx^m_t)\big)\; \big(D V(y) \cdot D u_m(\Xx^m_t)\big)}{V(y)} 
+ \frac{\big(DV(y)\cdot Du(y)\big) \big(Du(y) \cdot D u_m(\Xx^m_t)\big)^2 }{ 4V^2(y)} \Big\}dy. 
\end {align*} 
 In order to bound the right-hand side of the above inequality we note that, since $V\in C^2(\T^d)$, in the formula above we may replace $V(y)$ with $V(\Xx^m_t )$ and $DV(y)$ with $DV(\Xx^m_t )$ up to an error which goes to zero uniformly w.r.t. $t$ as $m \to \infty$, i.e.,
  \begin{align*}
(\eta_m * B_C) (\Xx^m_t) & D u_m(\Xx^m_t) \cdot D u_m(\Xx^m_t)
\\
&=\int \eta_m (\Xx^m_t -y )
\Big \{ 
C\Big( | D u_m(\Xx^m_t) |^2+\frac{\big(Du(y) \cdot D u_m(\Xx^m_t) \big)^2 }{ 2V(\Xx^m_t )}\Big) 
\\
&\quad+ \frac{\big(Du(y) \cdot D u_m(\Xx^m_t)\big)\; \big(D V(\Xx^m_t ) \cdot D u_m(\Xx^m_t)\big)}{V(\Xx^m_t )} 
\\
&\quad+ \frac{\big(DV(\Xx^m_t )\cdot Du(y)\big) \big(Du(y) \cdot D u_m(\Xx^m_t)\big)^2 }{ 4V^2(\Xx^m_t )} \Big\}dy
+ \varepsilon_m
 \end {align*} 
 for all $t\in [0 ,T]$, with $\varepsilon_m\downarrow 0$ as $m\to\infty$.
 Furthermore, we claim that, in the third term of the above integral, we can replace $DV(\Xx^m_t )\cdot Du(y)$ with $DV(\Xx^m_t )\cdot \mathbf{p}[t]$  up to an error which goes to zero as $m\to \infty$. Indeed, first we observe that 
 \begin{multline*}
 \int \eta_m (\Xx^m_t -y  ) \frac{\big(DV(\Xx^m_t )\cdot (Du(y )-\mathbf{p}[t]) \big)\big(Du(y) \cdot D u_m(\Xx^m_t)\big)^2 }{ 4V^2(\Xx^m_t )}\, dy 
 \\
=  \int_{B_{\frac 1m} (\Xx^m_t)}\eta_m (\Xx^m_t -y  )  \frac{\big(DV(\Xx^m_t )\cdot (Du(y )-\mathbf{p}[t]) \big)\big(Du(y) \cdot D u_m(\Xx^m_t)\big)^2 }{ 4V^2(\Xx^m_t )}\, dy . 
 \end{multline*} 
 Then, for a.e. $y\in B_{\frac 1m} (\Xx^m_t)$, we have that 
 $$
	Du(y)\cdot Du_m (\Xx_t^m)= 	Du(y)\cdot \big(Du_m (\Xx_t^m)-\mathbf{p}[t] \big)
+Du(y)\cdot\mathbf{p}[t].
$$
In view of \eqref{convvel}, the first term on the right-hand side goes to zero for a.e. $t\in [0,T]$. 
Let us consider the second term: for a.e. $y\in B_{\frac 1m} (\Xx^m_t)$ we have that   
\begin{equation}\label{eq:duy}
Du(y)\cdot\mathbf{p}[t]= \big(Du(y)-\text{proj}_{D^+ u(\Xx (t))} (Du(y))\big) \cdot  \mathbf{p}[t] 
+ \text{proj}_{D^+ u(\Xx (t))} (Du(y)) \cdot  \mathbf{p}[t],
\end{equation}
where 
 $
 \text{proj}_{D^+ u(\Xx (t))} (Du(y))) 
 $
 stands for the projection of $Du(y)$ onto $D^+ u(\Xx (t))$. 
We claim that the first term on the right-hand side of  \eqref{eq:duy} goes to zero as $m\to \infty$. Indeed, in view of the upper semicontinuity of $D^+u$, we have that for every $\epsilon >0$ there exists $m_{\epsilon}$ such that, for every $m>m_{\epsilon}$ and for a.e. $y\in B_{1/m} (\Xx (t))$, 
$
| Du(y)-\text{proj}_{D^+ u(\Xx (t))} (Du(y)) |  <\epsilon .
$
Next, we observe that, on account of \eqref{const}, 
\begin{equation*}
\text{proj}_{D^+ u(\Xx (t))} (Du(y)) \cdot  \mathbf{p}[t]=|\mathbf{p}[t]|^2
\end{equation*}
for a.e. $y\in B_{\frac 1m} (\Xx^m_t)$  and a.e. $t\in [0,T]$.  

In conclusion, for a.e. $s\in[0,T]$ we have that
\begin{multline*}
\lim_{m\to \infty}  (\eta_m * B_C)(\Xx^m_s)\; D u_m(\Xx^m_s) \cdot D u_m(\Xx^m_s)
\\
=     
C\Big( |  \mathbf{p}[s] |^2+\frac{ |  \mathbf{p}[s] |^4 }{ 2V[s]}\Big) 
+ \frac{ |  \mathbf{p}[s] |^2 }{V[s]}\; D V[s] \cdot  \mathbf{p}[s]
+ \frac{ | \mathbf{p}[s]|^4 }{ 4V[s]^2}\; D V[s] \cdot  \mathbf{p}[s]. 
 \end{multline*}
So, by appealing to the dominated convergence theorem  we deduce that,  for all $t\in [0 ,T]$,
 \begin{multline*}
\lim_{m\to \infty} \int_{t}^{t+h} (\eta_m * B_C)(\Xx^m_s)\; D u_m(\Xx^m_s) \cdot D u_m(\Xx^m_s)\; ds  
\\
=\int_t^{t+h}    
\Big \{ 
C\Big( |  \mathbf{p}[s] |^2+\frac{ |  \mathbf{p}[s] |^4 }{ 2V[s]}\Big) 
+ \frac{ |  \mathbf{p}[s] |^2 }{V[s]}\; D V[s] \cdot  \mathbf{p}[s]
+ \frac{ | \mathbf{p}[s]|^4 }{ 4V[s]^2}\; D V[s] \cdot  \mathbf{p}[s]\Big \}\; ds  . 
 \end{multline*} 
 Hence, observing that,  since $|\mathbf{p}[t]|^2$ is right-continuous,
  $$
  \lim_{m\to \infty} \big (H_m(t+h)-H_m(t)\big )=H[t+h]- H[t],
  $$
we can take the limit as $m\to \infty$ in \eqref{intineq} to conclude that,  for all $t\in [0 ,T)$ and all $h\in(0,T-t)$,
\begin{multline*}
H[t+h]- H[t] \leqslant   \int_t^{t+h}    
\Big\{ 
C\Big( |  \mathbf{p}[s] |^2+\frac{ |  \mathbf{p}[s] |^4 }{ 2V[s]}\Big)
\\  
+\Big( \frac{ |  \mathbf{p}[s] |^2 }{V[s]}
+ \frac{| \mathbf{p}[s]|^4 }{ 4V[s ]^2}+1\Big)
\; \big(D V[s] \cdot  \mathbf{p}[s]\big)\Big\}\; ds 
\\
=\int_t^{t+h} \Big\{C|\mathbf{p}[s]|^2\, \frac{H[s]}{V[s]} +\big(D V[s] \cdot  \mathbf{p}[s]\big) \Big(\frac{ H[s]}{V[s]}\Big)^2 \Big\}\; ds . 
\end{multline*}
  Furthermore, since $u$ is a viscosity subsolution of \eqref{HJ}, we have that 
  \begin{align*}
  \frac 12 |\mathbf{p}[s]|^2 +V[s]\leqslant   0\iff \frac 12 \frac{ |\mathbf{p}[s]|^2}{V[s]}+1\geqslant0
 & \iff  \frac{ |\mathbf{p}[s]|^2}{V[s]}\geqslant -2 
  \\
  &\implies  C|\mathbf{p}[s]|^2\, \frac{H[s]}{V[s]} \underbrace{\le}_{H[s]\leqslant   0} -\,2\,C\,H[s].  
  \end{align*}
 Now, thanks to \eqref{-1} and \eqref{eq:signv}, we find that 
 $$
 -\frac{H[s]}{\;V[s]^2}\leqslant   \frac 1{M^2}, 
 $$
 and 
 $$
 \big(D V[s] \cdot  \mathbf{p}[s]\big) \Big(\frac{ H[s]}{V[s]}\Big)^2
 \leqslant  - \frac{\| DV\|_{L^\infty}\, Lip(u)}{M^2} \;H[s]
 $$
  Then,   
$$
  C|\mathbf{p}[s]|^2\, \frac{H[s]}{V[s]} +\big(D V[s] \cdot  \mathbf{p}[s]\big) \Big(\frac{ H[s]}{V[s]}\Big)^2 
\leqslant   \underbrace{\Big( -2C -\frac{\| DV\|_{L^\infty} \, Lip(u)}{M^2}\Big)}_{=:-C_1} \; H [s] . 
$$
So, for all $t\in [0 ,T)$ and all $h\in(0,T-t)$ we end up with the integral inequality 
\begin{equation}
\label{eq:dinin}
H[t+h]- H[t] \leqslant   -C_1\int_{t}^{t+h} H[s]\; ds
\end{equation}
with $C_1\geqslant 0$. 
We are now in a position to apply Lemma~\ref{lem:dinin} below which ensures that
%
%
$$
H[0]\geqslant e^{C_1t} H[t], \qquad \forall t\in  [0,T). 
$$
 Now, since we have assumed $u$ to be continuously differentiable at $\Xx(T, x_0 )$, we can take  the limit as $t\to T^-$ in the above inequality  to find the contradiction 
$
0= e^{C_1T}H [T] \leqslant   H[0]<0
$. 
This completes the proof of Theorem \ref{t:mono}. 
 \end{proof}
The following nonsmooth Gronwall-type lemma has just been used in the above proof.
\begin{lemma} \label{lem:dinin}
Let $\phi:[0,T]\longrightarrow (-\infty, 0)$ be a bounded right-continuous function such that 
\begin{equation}
\label{eq:dinigiusto}
\phi(t)-\phi(s) \leqslant   -C\int_{s}^{t} \phi(r)\; dr\qquad \forall t\in  [0,T)
\end{equation}
for some constant $C\geqslant 0$. Then, $[0,T)\ni t\mapsto e^{Ct} \phi(t)$ is decreasing. 
\end{lemma} 
\begin{proof}
Observe that, like any bounded right-continuous function, $\phi\in L^1(0,T)$. So,  fix any $t\in [0,T]$ and define
 $$
 \psi(s)=\int_s^t  \phi(r)\, dr \qquad s\in [0,t] . 
 $$
 Then $\psi \in AC ([0,t])$ and, owing to  \eqref{eq:dinin}, we have that
 $$
 \frac {d\psi}{ds}  (s)=-\phi(s)\leqslant -\phi(t)-C_1\psi(s)\qquad s\in[0,t] \text{ a.e.}
 $$
 or
 \begin{equation*}
 \frac {d}{ds} \Big(e^{C_1s}\psi(s)\Big)\leqslant - e^{C_1s}\phi(t) \qquad s\in[0,t] \text{ a.e.}
\end{equation*}
By integrating over $[s,t]$ we get
$
\big(e^{C_1(t-s)}-1\big)\,\phi(t)\leqslant C_1\,\psi(s)  
$
for all $s\in[0,t].$
So, returning to \eqref{eq:dinin}, we conclude that
\begin{equation*}
\phi(t)-\phi(s)\leqslant -C_1\,\psi(s)\leqslant \big(1-e^{C_1(t-s)}\big)\,\phi(t) \qquad \forall s\in[0,t],
\end{equation*}
which in turn ensures that $[0,t]\ni s\mapsto e^{C_1t} \phi(s)$ is decreasing. 
\end{proof}
\begin{remarks}\label{rmk:extension}
We note that the above proof applies, with minor changes, to the eikonal-type equation
\begin{equation}\label{eq:eikonal}
	\frac 12\langle \Lambda(x)Du(x),Du(x)\rangle +V(x)=\alpha[0]\qquad(x\in\T^d),
\end{equation}
where $V\in C^2(\T^d)$ and $\Lambda(x)$ is a positive definite symmetric $d\times d$ matrix, smoothly depending on $x\in\T^d$. In this case, which was addressed in \cite{Albano2016_1}, the role of \eqref{flow} is played by the
generalized characteristic system
\begin{equation}\label{eikonal_flow}
\begin{cases}
\dXx(t, x) \in \Lambda\big(\Xxu(t,x)\big)D^{+}u\big(\Xx(t, x)\big), \quad t \geqslant   0\mbox{ a.e.}
\\
\Xx(0, x)=x
\end{cases}
\end{equation}
which defines a Lipschitz semi-flow on $\T^d$. Consequently, instead of \eqref{eq:dggf}, one has that
\begin{equation}
\label{eq:eikonal_dggf}
\frac{d^+}{dt} u\big(\Xxu(t, x)\big) =
\big\langle \Lambda\big(\Xxu(t, x)\big)p_0^\Lambda\big(\Xxu(t, x)\big),p_0^\Lambda\big(\Xxu(t, x)\big)\big\rangle\qquad(t\geqslant 0),
\end{equation}
where
\begin{equation}
\label{eq:eikonal_ms}
p_{0}^\Lambda(y)= \argmin_{p \in D^{+}u(y)} \langle \Lambda(y)p,p\rangle\qquad(y\in\T^d).
\end{equation}
Moreover, the sets $\Crit(u), \Sing(u)$, $\Crit^{*}(u)$, and $\Cut(u)$ can be defined similarly, as well as  occupational measures. So, all the results of this paper extend to equation \eqref{eq:eikonal}, in particular Theorem~\ref{the:case=0}
and Corollary~\ref{cor:summa} below.
\end{remarks}

\section{Long-time behavior of gradient flows}
\label{sec:occupational}
In this section we apply the occupational-measure framework developed above to obtain a precise description of the asymptotic behavior of generalized characteristics in the critical case.
Given a semiconcave solution $u$ to \eqref{HJ}, we have already proved that:
\begin{itemize}
\item[$(a)$] $\Crit(u)$ is an approximate attractor for $\Xxu$ (Theorem~\ref{the:asympt_critical}),
\item[$(b)$] $\Sing(u)$ is invariant under $\Xxu$ (Theorem~\ref{t:mono}).
\end{itemize}
Moreover, we have fully characterized the behavior of the gradient flow at points $x\in\T^d$ such that $\crit(x)<\infty$
(Proposition~\ref{pro:finite_critical}).
We now proceed to study the flow at the points $x\in\T^d$ for which the critical time is infinite. Since
\begin{equation*}
\Crit(u)=\Crit\!\!\,^*(u)\cup \big(\Sing(u)\cap\Crit(u)\big)
\end{equation*}
we look for necessary and sufficient conditions for  $\Xxu$ to approach $\Crit^*(u)$ or the singular critical set of $u$ as $t\to\infty$. Our first result addresses $\Crit^*(u)$.
Let us set
\begin{equation}\label{eq:MV}
M(V)= {\Crit}^{*}(u)(=\argmax_{x\in \T^{d}}  V(x)).
\end{equation}
\begin{theorem}\label{the:case=0}
Let 
 $u: \T^{d} \to \R$ be a semiconcave  solution of \eqref{eq:MS} and let $x \in \T^{d}$ be such that $\crit(x)=\infty$. The  following properties are equivalent:
\begin{itemize} 
\item[(a)] for any $\mu\in\W_u(x)$ we have that
\begin{equation}
\label{eq:mean=0}
\int_{\T^{d}} V(y) d\mu(y) = 0,
\end{equation}
\item[(b)]  for any $\varepsilon>0$ we have that
\begin{equation}\label{eq:lim_mean=0}
\lim_{T\to\infty}\frac {1}{T}\LL^{1} \Big(  \big\{t \in [0,T]~:~d_{M(V)} \big(\Xxu(t, x)\big) \geqslant   \eps\big\}  \Big) =0. 
\end{equation}
\end{itemize}
\end{theorem}
\proof
Assume $(a)$  and suppose $(b)$ fails for some $\varepsilon>0$. Then, for some $\delta>0$ and some sequence  $T_{k} \uparrow \infty$,
we have that
\begin{equation}\label{eq:allT}
\frac {1}{T_k}\LL^{1} \Big(  \big\{t \in [0,T_k]~:~d_{M(V)} \big(\Xxu(t, x)\big) \geqslant   \eps\big\}  \Big) \geqslant   \delta.
\end{equation}
We can also assume that
$\mu_{x}^{T_{k}} \rightharpoonup \mu\in\W_u(x)$ as $k\to \infty$ without loss of generality. Set
\begin{equation*}
m(\eps)=\inf\big\{-V(y)~:~y\in\T^d\,,\;d_{M(V)}(y)\geqslant\eps\big\}
\end{equation*}
and observe that
$
0<m(\eps)\leqslant     2\|V\|_\infty.
$ 
Since for any $k\in\N$
\begin{equation*}
m(\eps)\LL^{1} \Big(  \big\{t \in [0,T_{k}]~:~d_{M(V)} \big(\Xxu(t, x)\big) \geqslant   \eps\big\}  \Big)\leqslant     
-\int_0^{T_k}V\big(\Xxu(t,x)\big)dt,
\end{equation*}
we conclude that
\begin{equation*}
\frac 1{T_k}\LL^{1} \Big(  \big\{t \in [0,T_{k}]~:~d_{M(V)} \big(\Xxu(t, x)\big) \geqslant   \eps\big\}  \Big)\leqslant     
\,-\,\frac 1{m(\eps)T_k}\, \int_0^{T_k}V\big(\Xxu(t,x)\big)dt.
\end{equation*}
Now, 
\begin{equation*}
\lim_{T\to\infty}\frac 1{T_k} \int_0^{T_k}V\big(\Xxu(t,x)\big)dt=\int_{\T^{d}} V(y) d\mu(y) = 0 
\end{equation*}
in view of \eqref{eq:mean=0}. This contradicts \eqref{eq:allT} showing that $(b)$ holds true.

\smallskip
Conversely, assume $(b)$. Let $\mu\in\W_u(x)$ and let $T_{k}\uparrow\infty$ be such that $\mu_{x}^{T_{k}} \rightharpoonup \mu$ as $k\to \infty.$
Fix any $\eps>0$. Then
\begin{multline*}
0\geqslant     \frac 1{T_{k}} \int_0^{T_{k}}V\big(\Xxu(t,x)\big)dt
\\
\geqslant     \min_{d_{M(V)}(y)<\eps}V(y)-\frac{2\|V\|_\infty}{T_k}\LL^{1} \Big(  \big\{t \in [0,T_{k}]~:~d_{M(V)} \big(\Xxu(t, x)\big) \geqslant   \eps\big\}  \Big) .
\end{multline*}
Therefore, taking the limit as $k\to\infty$ and recalling \eqref{eq:lim_mean=0} we obtain
\begin{equation*}
0\geqslant     \int_{\T^{d}} V(y) d\mu(y) \geqslant   \min_{d_{M(V)}(y)<\eps}V(y)\,,  
\end{equation*}
where the right-hand side of the above inequality tends to zero as $\eps\to 0$. Since $\eps$ is arbitrary the conclusion follows.
\qed
\begin{remarks}\label{rmk:case=0}
Owing to Corollary~\ref{cor:ggf}~$(i)$, we have that $\Xxu(\cdot,x)$ cannot approach $\argmin_{ \T^{d}}u$ unless $x$ is itself a minimum point of $u$. Therefore, if 
\begin{equation*}
x\in\T^d\setminus \argmin_{ \T^{d}}u\quad\mbox{and\quad $\Crit$}
^*(u)=\argmin_{ \T^{d}}u,
\end{equation*}
then there exists no  occupational measure of $\Xxu$ satisfying \eqref{eq:mean=0}.
\end{remarks}
We now consider the opposite situation.
\begin{theorem}\label{the:case<0}
Let 
 $u: \T^{d} \to \R$ be a semiconcave  solution of \eqref{eq:MS} and set  
 $$\delta(V):=\max_{\T^d}V-\min_{\T^d}V.$$ Let $x \in \T^{d}$ be such that $\crit(x)=\infty$. Then, the  following properties are equivalent:
\begin{enumerate}[(a)]
\item there exists $\mu\in\W_u(x)$ such that
\begin{equation}
\label{eq:mean<0}
\int_{\T^{d}} V(y) d\mu(y) < 0;
\end{equation}
\item $\delta(V)>0$ and  there exists a constant $\eta>0$  such that
\begin{equation}\label{eq:char_singularity}
 \limsup_{T\to\infty}
\frac{1}{T}\LL^{1} \Big(  \Big\{t \in [0,T]~:~\frac{1}{2}\big|p_0\big(\Xxu(t,x)\big)\big|^{2} + V\big(\Xxu(t,x)\big)+\eta\leqslant  0\Big\}  \Big) 
\geqslant \frac{\eta}{\delta(V)};
\end{equation}
\item there exists  a constant $\delta>0$ such that
\begin{equation}\label{eq:sing_density}
\limsup_{T\to\infty}\frac {1}{T}\LL^{1} \Big(  \big\{t \in [0,T]~:~d_{M(V)} \big(\Xxu(t, x)\big) \geqslant   \delta\big\}  \Big) >0
 \end{equation}
 where $M(V)$ is defined in \eqref{eq:MV}.
\end{enumerate}

\end{theorem}
\proof 
 $(a)\Rightarrow(b)$:   define 
\begin{equation}\label{eq:3_equiv}
\eta=-\frac 13 \int_{\T^{d}} V(y) d\mu(y)
\end{equation}
and observe that  $0<\eta\leqslant   \delta(V)/3$. Hence, $\delta(V)>0$. Let $T_{k} \uparrow \infty$ be such that $\mu_{x}^{T_{k}} \rightharpoonup \mu$ as $k\to \infty$. Since $u$ is bounded,  for  $k$  large enough, say $k\geqslant k_0$, we have that
\begin{multline*}
\frac 1{T_{k}} \int_0^{T_{k}}\Big(\frac{1}{2}\big|p_0\big(\Xxu(t,x)\big)\big|^{2} + V\big(\Xxu(t,x)\big)\Big)dt
\\
 =\frac{u\big(\Xxu(T_k,x)\big)-u(x)\big)}{2T_k} + \frac 1{T_{k}} \int_0^{T_{k}}\big( V\big(\Xxu(t,x)\big)\big)dt\leqslant   -2\eta.
\end{multline*}
So, 
by Lemma~\ref{lem:abstract_meas} below, applied to the function
$$
f(t)=-\frac{1}{2}\big|p_0\big(\Xxu(t,x)\big)\big|^{2} -V\big(\Xxu(t,x)\big)\qquad(t\in[0,T_k])
$$
with $ \delta=\delta(V),\,\rho=2\eta$, and $\lambda=\eta$, we conclude that
\begin{equation*}
\frac {1}{T_k}\LL^{1} \Big(  \Big\{t \in [0,T_{k}]~:~\frac{1}{2}\big|p_0\big(\Xxu(t,x)\big)\big|^{2} + V\big(\Xxu(t,x)\big)+\eta\leqslant  0\Big\}  \Big) 
\geqslant \frac{\eta}{\delta(V)-\eta}
\end{equation*} 
for all $k\geqslant k_0$. The conclusion \eqref{eq:char_singularity} follows. 

\medskip\noindent
$(b)\Rightarrow(c)$:
 let  $T_{k} \uparrow \infty$  be such that 
\begin{equation}\label{eq:c->d}
 \lim_{k\to\infty}
\frac{1}{T_k}\LL^{1} \Big(  \Big\{t \in [0,T_k]~:~\frac{1}{2}\big|p_0\big(\Xxu(t,x)\big)\big|^{2} + V\big(\Xxu(t,x)\big)+\eta\leqslant  0\Big\}  \Big) 
\geqslant \frac{\eta}{\delta(V)}.
\end{equation} 
In order to prove  \eqref{eq:sing_density} observe that, since $0=\max_{\T^d}V$, by the uniform continuity of $V$ we deduce that, for some $\delta=\delta(\eta)>0$,
\begin{equation*}
\big\{x\in\T^d~:~V(x)+\eta\leqslant   0\big\}\subset \big\{x\in\T^d~:~d_{M(V)}(x)\geqslant   \delta \big\}
\end{equation*}
So, for all $k\in \N$ we obtain the lower bound
\begin{multline*}
\frac{1}{T_k}\LL^{1} \Big(  \big\{t \in [0,T_k]~:~d_{M(V)} \big(\Xxu(t, x)\big) \geqslant   \delta\big\}  \Big)
\\
\geq
\frac{1}{T_k}\LL^{1} \Big(  \Big\{t \in [0,T_{k}]~:~V\big(\Xxu(t,x)\big)+\eta\leqslant  0\Big\}  \Big) 
\\
\geq
\frac{1}{T_k}\LL^{1} \Big(  \Big\{t \in [0,T_{k}]~:~\frac{1}{2}\big|p_0\big(\Xxu(t,x)\big)\big|^{2} + V\big(\Xxu(t,x)\big)+\eta\leqslant  0\Big\}  \Big) 
\end{multline*}
which in turn yields the conclusion in view of \eqref{eq:c->d}.


\medskip\noindent
$(c)\Rightarrow(a)$:  let us argue by contradiction assuming that \eqref{eq:mean=0} holds for all $\mu\in\W_u(x)$. Then, \eqref{eq:lim_mean=0} must also hold by Theorem~\ref{the:case=0}, 
in contrast with \eqref{eq:sing_density}.
\qed
\begin{lemma}\label{lem:abstract_meas}
Let $T,\delta>0$ be fixed and let $f:[0,T]\to[0,\delta]$ be a  Lebesgue measurable function such that
\begin{equation}
\label{eq:abstract_meas}
\frac 1T\int_0^Tf(t) dt\geqslant \rho
\end{equation}
for some $\rho\in(0,\delta)$. Then for any $\lambda\in(0,\rho)$ we have that
\begin{equation*}
\frac {1}{T}\LL^{1} \big(  \big\{t \in [0,T]~:~f(t)\geqslant\lambda \big\}  \big) 
\geqslant \frac{\rho-\lambda}{\delta-\lambda}.
\end{equation*}
\end{lemma}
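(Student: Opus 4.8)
The plan is the elementary ``splitting at the threshold'' argument: decompose $[0,T]$ according to whether $f$ reaches the level $\lambda$, bound the integral of $f$ on each piece by the obvious constants, and compare with the hypothesis \eqref{eq:abstract_meas}.

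Concretely, first I would set
$$
E=\big\{t\in[0,T]~:~f(t)\ge\lambda\big\},\qquad E^{c}=[0,T]\setminus E,
$$
and write $\int_0^T f(t)\,dt=\int_E f(t)\,dt+\int_{E^c} f(t)\,dt$. On $E$ I use $f\le\delta$ (valid since $f$ takes values in $[0,\delta]$), and on $E^{c}$ I use $f<\lambda$, obtaining
$$
\int_0^T f(t)\,dt\le \delta\,\LL^{1}(E)+\lambda\,\LL^{1}(E^{c})=\delta\,\LL^{1}(E)+\lambda\big(T-\LL^{1}(E)\big).
$$
Combining this with the lower bound $\int_0^T f(t)\,dt\ge \rho T$ from \eqref{eq:abstract_meas} gives
$$
\rho T\le (\delta-\lambda)\,\LL^{1}(E)+\lambda T,
$$
hence $(\rho-\lambda)T\le(\delta-\lambda)\,\LL^{1}(E)$. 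Since $0<\lambda<\rho<\delta$ both $\rho-\lambda$ and $\delta-\lambda$ are strictly positive, so I may divide to conclude $\LL^{1}(E)\ge \frac{\rho-\lambda}{\delta-\lambda}\,T$, which is exactly the claimed inequality after dividing by $T$.

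There is essentially no obstacle here: the only points requiring a moment's care are measurability of $E$ (immediate, as $f$ is measurable) and the positivity of the denominator $\delta-\lambda$, guaranteed by the hypotheses $\lambda<\rho$ and $\rho<\delta$. One could also phrase the same computation via Chebyshev's inequality applied to $\delta-f\ge 0$, but the direct splitting above is the cleanest.
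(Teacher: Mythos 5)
Your proof is correct and follows essentially the same route as the paper: splitting $[0,T]$ at the threshold $\lambda$, bounding $f$ by $\delta$ on $\{f\geqslant\lambda\}$ and by $\lambda$ on the complement, and comparing with the hypothesis. The paper's version is the same computation phrased with indicator functions $\chi_{\{f\geqslant\lambda\}}$ and $\chi_{\{f<\lambda\}}$ and the notation $\varphi(\lambda)=\LL^1(\{f\geqslant\lambda\})$, but the argument is identical.
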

\proof
Fix $\lambda\in(0,\rho)$ and let
\begin{equation*}
\varphi(\lambda)= \LL^{1} \big(  \big\{t \in [0,T]~:~f(t)\geqslant\lambda \big\}  \big)\qquad(t\in[0,T])
\end{equation*}
Then \eqref{eq:abstract_meas} ensures that
\begin{equation*}
\rho\leqslant   \frac 1T\int_0^Tf(t) \big(\chi_{_{\{f\geqslant \lambda\}}}(s)+\chi_{_{\{f< \lambda\}}}(s)\big)ds
\leqslant   \frac{\delta}{T}\varphi(\lambda)
+\frac{\lambda}{T}\big(T-\varphi(\lambda)\big).
\end{equation*}
Thus,
$
(\delta-\lambda)\varphi(\lambda)/T\geqslant \rho-\lambda$.
\qed

\medskip
The existence of a measure $\mu\in\W_u(x)$ satisfying the strict inequality \eqref{eq:mean<0} has the following consequence on propagation of singularities.
\begin{corollary}\label{cor:summa}
Let $x \in \T^{d}$. If \eqref{eq:mean<0} holds for some measure $\mu\in\W_u(x)$, then 
\begin{equation}\label{eq:sing_pro}
\exists\, t_0\in[0,\infty)\quad\mbox{such that}\quad \Xxu(t,x)\in\Sing(u)\quad\forall t\geqslant   t_0.
\end{equation}
\end{corollary}
\proof 
Suppose, first, $\crit(x)<\infty$. Then $\Xxu(\crit(x),x)\in\Crit(u)$ and $ \W_u(x)=\{\delta_{\Xxu(\crit(x),x)}\}$ by Proposition~\ref{pro:finite_critical} and Corollary~\ref{cor:om}.
Therefore, \eqref{eq:mean<0} reduces to $V\big(\Xxu(\crit(x),x)\big)<0$. So, again by Proposition~\ref{pro:finite_critical}, $\Xxu(\crit(x),x)\in\Sing(u)$. Thus,
$\Xxu(t,x)\in\Sing(u)$ for all $t\geqslant   \crit(x)$ owing to Theorem~\ref{t:mono}. This yields \eqref{eq:sing_pro} for $t_0=\crit(x)$.

Now, assume $\crit(x)=\infty$ and let $\eta>0$ be as in point $(b)$ of Theorem~\ref{the:case<0}.
Then,  \eqref{eq:c->d} holds for some sequence $T_{k} \uparrow \infty$.
Owing to Remark~\ref{rmk:singularity},
\begin{equation*}
  \Big\{t \in [0,T_{k}]~:~\frac{1}{2}\big|p_0\big(\Xxu(t,x)\big)\big|^{2} + V\big(\Xxu(t,x)\big)+\eta\leqslant  0\Big\}  \subset\Sing(u). 
\end{equation*}
Therefore,  for all $k\in\N$ we have that
\begin{equation*}
\frac {1}{T_k}\LL^{1} \big(  \big\{t \in [0,T_{k}]~:~\Xxu(t,x)\in\Sing(u)\big\}  \big) 
\geqslant \frac{\eta}{\delta(V)}
\end{equation*}
and \eqref{eq:sing_pro} follows by applying Theorem~\ref{t:mono} to  any 
$ t_0 \in\big\{t \in [0,T_{0}]~:~\Xxu(t,x)\in\Sing(u)\big\}  $. \qed
\begin{remarks}\label{rmk:case<0}
Proposition~\ref{pro:finite_critical}, Theorem~\ref{the:case=0}, and Corollary~\ref{cor:summa} provide the following synthetic view of the asymptotic behavior of $\Xxu(t,x)$ for any $x\in\T^ d$:
\begin{itemize}
\item either equality \eqref{eq:mean=0} holds for all measures $\mu\in\W_u(x)$, and  the regular critical set of $u$ is an approximate attractor for the generalized gradient flow of $u$,
\item  or the strict inequality \eqref{eq:mean<0} holds some  $\mu\in\W_u(x)$, and the flow enters the singular set of $u$ in finite time to remain there forever.
\end{itemize}
\end{remarks}

\section{Examples}
\label{sec:examples}
In this section, we discuss two examples to illustrate the interaction among occupational measures, critical points, and singularities as it has been described in this paper. 

In our first example, we see that the critical set has both a regular and a singular component. The generalized gradient flow starting outside $\Crit^*(u)$ always goes to the singular critical set reaching it in finite time.
\begin{example}
[One-dimensional pendulum]
\label{exa:pendulum}  Let $d=1$ and take
\begin{equation*}
V(x)=-\cos(2\pi x)\qquad(x\in\T^1).
\end{equation*}
Then $\maneV=1$ and the Hamilton-Jacobi equation \eqref{eq:MS} reads as follows
\begin{equation}
\label{eq:HJ_pendulum}
\frac 12|u'(x)|^2-\cos(2\pi x)=1\qquad(x\in\T^1).
\end{equation}
Observe that the periodic extension (with period $1$) of the function
\begin{equation*}
u(x)=
\begin{cases}\displaystyle
 -\int_0^x\sqrt{2\big(1+\cos(2\pi y)\big)}\,dy
 &
 0\leqslant   x<\frac12
 \vspace{.1cm}
 \\\displaystyle
 - \int_x^1\sqrt{2\big(1+\cos(2\pi y)\big)}\,dy
 &
 \frac 12\leqslant   x<1
\end{cases}
\end{equation*}
is a viscosity solution to \eqref{eq:HJ_pendulum}---in fact, unique up to constants. Moreover, $u$ is of class $\mathcal C^1$ on the open interval $(0,1)$ with
\begin{equation*}
u'(x)=
\begin{cases}\displaystyle
 -\sqrt{2\big(1+\cos(2\pi x)\big)}
 &
 0<   x\leqslant \frac12
 \vspace{.1cm}
 \\\displaystyle
 \sqrt{2\big(1+\cos(2\pi x)\big)}
 &
 \frac 12\leqslant   x<1
\end{cases}
\quad\mbox{and}\quad D^+u(0)=[-2,2]\,.
\end{equation*}
Consequently,
\begin{equation*}
\Sing(u)=\Z\,,\quad \Crit\!\!\,^*(u)=\Big\{\frac 12\Big\}+\Z\,,\quad \Crit(u)=\Big\{0,\frac 12\Big\}+\Z.
\end{equation*}
Moreover,  the gradient flow of $u$ satisfies
\begin{equation*}
\Xxu'(t,x)=
\begin{cases}
  -\sqrt{2\big(1+\cos(2\pi 
 \Xxu(t,x))\big)}
 &
 x\in (0,1/2)
 \vspace{.1cm}
 \\
 \sqrt{2\big(1+\cos(2\pi 
 \Xxu(t,x))\big)}
 &
 x\in(1/2,1)
\end{cases}
\end{equation*}
So, if $x\neq 1/2$ the flow $\Xxu(t,x)$ moves away from $\Crit\!\!\,^*(u)$---as observed in Remark~\ref{rmk:case=0}---and reaches $\Sing(u)$ in finite time.
For instance, for $x\in (0,1/2)$ the flow can  be computed by solving
\begin{equation*}
\begin{cases}
 \frac{\dXx(t)}{\sqrt{2\big(1+\cos(2\pi 
 \Xx(t))\big)}}
 =-1
 &
( t>0)
 \vspace{.1cm}
 \\
 \Xx(0)=x\,.
 &
\end{cases}
\end{equation*}
Since $0<x<1/2$, the above Cauchy problem can be recast as 
\begin{equation*}
\begin{cases}
 \frac{\dXx(t)}{2\cos(\pi 
 \Xx(t))\big)}
 =-1
 &
( t>0)
 \vspace{.1cm}
 \\
 \Xx(0)=x
 &
\end{cases}
\end{equation*}
to obtain, eventually,
\begin{equation}
\label{eq:soluzione_pendolo}
\Xxu(t,x)=\,\frac 2\pi\,\arctan \left[\frac{1+\tan \frac{\pi x}{2}-e^{2\pi t}\big(1-\tan\frac{\pi x}{2}\big)}{1+\tan \frac{\pi x}{2}+e^{2\pi t}\big(1-\tan\frac{\pi x}{2}\big)}\right]\quad(t\geqslant 0)
\end{equation}
Thanks to \eqref{eq:soluzione_pendolo}, we can compute explicitly the cut and critical time functions as follows
\begin{equation*}
\cut(x)=\crit(x)=\,\frac{1}{2\pi}\, \log\left(
\frac{1+\tan \frac{\pi x}{2}}{1-\tan \frac{\pi x}{2}}
\right)\qquad(0\leqslant x<1/2)
\end{equation*}
and deduce that $\W_u(x)=\left\{\delta_{0}\right\}$. The analysis of the case of $x\in (1/2,1]$ is totally similar. \hfill \qed
\end{example}

\smallskip
In our second example, the solution is smooth and the generalized gradient flow starting outside $\Crit^*(u)$ reaches the regular critical set in infinite time, as expected.
\begin{example}\label{exa:smooth}
 Let $d=1$ and take
\begin{align*}
	V(x)=-\sin^2(2\pi x)\qquad(x\in\T^1).
\end{align*}
Since $\alpha[0]=0$, the critical equation reduces to 
\begin{equation}\label{eq:HJ_degenerate}
	\frac 12|u'(x)|^2-\sin^2(2\pi x)=0\qquad(x\in\T^1).
\end{equation}
Observe that the projected Aubry set has two components, $\{0\}$ and $\{1/2\}$, and \eqref{eq:HJ_degenerate} admits the smooth solution
\begin{align*}
	u(x)=\frac{\sqrt{2}}{2\pi}\Big(1-\cos(2\pi x)\Big)\qquad(x\in\T^1)
\end{align*}
with the associated  gradient flow 
\begin{align*}
	\begin{cases}
		\dot{\mathbf{x}}(t)=\sqrt{2}\sin\big(2\pi\mathbf{x}(t)\big),\\
		\mathbf{x}(0)=x.
	\end{cases}
\end{align*}
Let us  consider the flow $\Xxu(\cdot,x)$ for $x\in(0,1/2)$. Then
\begin{align*}
	\bigg[\frac 1{2\pi}\log\big|\tan \big(\pi\Xxu(s,x)\big)\big|\bigg]^t_0=\int^t_0\frac{\Xxu'(s,x)}{\sin\big(2\pi\Xxu(s,x)\big)}\ ds=t\sqrt{2}
\end{align*}
which yields
\begin{align*}
	\pi\Xxu(t,x)=\arctan\big(e^{2\pi t\sqrt{2}}\tan (\pi x)\big).
\end{align*}
Thus, $\lim_{t\to+\infty}\Xxu(t,x)=1/2$,  $\crit(x)=\infty$, and $\W_u(x)=\left\{\delta_{1/2}\right\}$.
\hfill \qed
\end{example}

\bmhead{Acknowledgements}

Paolo Albano, Piermarco Cannarsa and Cristian Mendico were supported, in part, by the National Group for Mathematical Analysis, Probability and Applications of the Italian Istituto Nazionale di Alta Matematica "Francesco Severi''. Piermarco Cannarsa also acknowledges support from the Excellence Department Project awarded to the Department of Mathematics, University of Rome Tor Vergata, CUP E83C23000330006, and  the European Union---Next Generation EU, PRIN 2022 PNRR (CUP E53D23017910001). Wei Cheng is partly supported by National Natural Science Foundation of China (Grant No. 12231010). The authors also thank Jiahui Hong for checking the proof of the global propagation result. 

\section*{Declarations}


\begin{itemize}
\item Funding: National Group for Mathematical Analysis, Probability and Applications of the Italian Istituto Nazionale di Alta Matematica "Francesco Severi''; Excellence Department Project awarded to the Department of Mathematics, University of Rome Tor Vergata, CUP E83C23000330006, and  the European Union---Next Generation EU, PRIN 2022 PNRR (CUP E53D23017910001); National Natural Science Foundation of China (Grant No. 12231010).
\item The authors declare no conflict of interest. 
\item Ethics approval and consent to participate: Not applicable
\item Consent for publication
\item Data availability: Not applicable
\item Materials availability: Not applicable
\item Code availability: Not applicable
\item Author contribution: These authors contributed equally to this work
\end{itemize}

\bibliography{sn-bibliography}

@article{KrylovBogolyubov1937,
  author    = {N. N. Krylov and N. N. Bogolyubov},
  title     = {La th{\'e}orie g{\'e}n{\'e}rale de la mesure dans son application {\`a} l'{\'e}tude des syst{\`e}mes dynamiques de la m{\'e}canique non lin{\'e}aire},
  journal   = {Annals of Mathematics},
  volume    = {38},
  year      = {1937},
  pages     = {65--113},
  language  = {French}
}

@book{KatokHasselblatt,
  author    = {Anatole Katok and Boris Hasselblatt},
  title     = {Introduction to the Modern Theory of Dynamical Systems},
  publisher = {Cambridge University Press},
  year      = {1995}
}

@book{Petersen1983,
  author    = {Karl Petersen},
  title     = {Ergodic Theory},
  series    = {Cambridge Studies in Advanced Mathematics},
  volume    = {2},
  publisher = {Cambridge University Press},
  year      = {1983}
}

@article{DaviniFathiIturriagaZavidovique2018,
  author    = {Andrea Davini and Albert Fathi and Renato Iturriaga and Maxime Zavidovique},
  title     = {Convergence of the solutions of the discounted Hamilton--Jacobi equation: rate of convergence},
  journal   = {Transactions of the American Mathematical Society},
  volume    = {370},
  year      = {2018},
  pages     = {713--754}
}

@book{Aubin_Cellina1984,
 author = {Aubin, Jean-Pierre and Cellina, Arrigo},
 title = {Differential inclusions. {Set}-valued maps and viability theory},
 fseries = {Grundlehren der Mathematischen Wissenschaften},
 series = {Grundlehren Math. Wiss.},
 issn = {0072-7830},
 volume = {264},
 year = {1984},
 publisher = {Springer, Cham},
 language = {English},
 keywords = {34A60,34-02,34A12,34D20},
 zbMATH = {3855514},
 Zbl = {0538.34007}
}

@Misc{Cannarsa_Cheng_Hong_Wang2024,
 Author = {Cannarsa, Piermarco and Cheng, Wei and Hong, Jiahui and Wang, Kaizhi},
 Title = {Variational construction of singular characteristics and propagation of singularities},
 Year = {2024},
 HowPublished = {Preprint, {arXiv}:2409.00961 [math.{AP}] (2024)},
 Keywords = {35F21,49L25,37J50},
 URL = {https://arxiv.org/abs/2409.00961},
 arXiv = {arXiv:2409.00961}
}

@Article{azagra-cappello-hajlasz_2023,
 Author = {Azagra, D. and Cappello, A. and Hajlasz, P.},
 Title = {A geometric approach to second-order differentiability of convex functions},
 FJournal = {Proceedings of the American Mathematical Society Ser. B},
 Journal = {Proc. Amer. Math. Soc. Ser. B},
 ISSN = { 2330-1511},
 Volume = {10},
 Number = {},
 Pages = {382--397},
 Year = {2023},
 Language = {English},
 Keywords = {26B25, 28A75, 41A30, 52A20, 52A27, 53C45},
 zbMATH = {},
 Zbl = {7766042}
}

@article{Cannarsa_Cheng_Hong2023,
 author = {Cannarsa, Piermarco and Cheng, Wei and Hong, Jiahui},
 title = {Topological and control theoretic properties of {Hamilton}-{Jacobi} equations via {Lax}-{Oleinik} commutators},
 fjournal = {Nonlinear Analysis. Real World Applications},
 journal = {Nonlinear Anal., Real World Appl.},
 issn = {1468-1218},
 volume = {84},
 pages = {14},
 note = {Id/No 104282},
 year = {2025},
 language = {English},
 doi = {10.1016/j.nonrwa.2024.104282},
 keywords = {35F21,49L25,37J40},
 zbMATH = {8005784},
 Zbl = {1560.35071}
}

@article{Chen_Hong_Zhao2022,
	author = {Chen, Cui and Hong, Jiahui and Zhao, Kai},
	date-added = {2023-01-19 16:21:30 +0800},
	date-modified = {2023-01-19 16:22:01 +0800},
	doi = {10.3934/dcds.2021179},
	fjournal = {Discrete and Continuous Dynamical Systems. Series A},
	issn = {1078-0947},
	journal = {Discrete Contin. Dyn. Syst.},
	mrclass = {35F21 (49L12 49L25)},
	mrnumber = {4385783},
	number = {4},
	pages = {1949--1970},
	title = {Global propagation of singularities for discounted {H}amilton-{J}acobi equations},
	url = {https://mathscinet.ams.org/mathscinet-getitem?mr=4385783},
	volume = {42},
	year = {2022},
	Bdsk-Url-1 = {https://mathscinet.ams.org/mathscinet-getitem?mr=4385783}}

@article{Artstein1999,
	author = {Artstein, Zvi},
	date-added = {2022-03-11 23:36:16 +0800},
	date-modified = {2022-03-11 23:36:49 +0800},
	doi = {10.1006/jdeq.1998.3536},
	fjournal = {Journal of Differential Equations},
	issn = {0022-0396},
	journal = {J. Differential Equations},
	mrclass = {34A60 (34E15)},
	mrnumber = {1674533},
	mrreviewer = {Tzanko D. Donchev},
	number = {2},
	pages = {289--307},
	title = {Invariant measures of differential inclusions applied to singular perturbations},
	url = {https://mathscinet.ams.org/mathscinet-getitem?mr=1674533},
	volume = {152},
	year = {1999},
	Bdsk-Url-1 = {https://mathscinet.ams.org/mathscinet-getitem?mr=1674533}}

@article{Cannarsa_Cheng2021b,
	author = {Cannarsa, Piermarco and Cheng, Wei},
	date-added = {2021-08-02 16:39:57 +0800},
	date-modified = {2021-08-02 16:42:46 +0800},
	doi = {10.1007/s40574-021-00279-4},
	fjournal = {Bollettino dell'Unione Matematica Italiana},
	issn = {1972-6724},
	journal = {Boll. Unione Mat. Ital.},
	mrclass = {35F21 (37J50 49L25)},
	mrnumber = {4290348},
	number = {3},
	pages = {483--504},
	title = {Local singular characteristics on {$\Bbb R^2$}},
	url = {https://mathscinet.ams.org/mathscinet-getitem?mr=4290348},
	volume = {14},
	year = {2021},
	Bdsk-Url-1 = {https://mathscinet.ams.org/mathscinet-getitem?mr=4290348}}

@article{Cannarsa_Cheng_Fathi2021,
	author = {Cannarsa, Piermarco and Cheng, Wei and Fathi, Albert},
	date-added = {2021-08-02 16:39:52 +0800},
	date-modified = {2021-08-02 16:42:05 +0800},
	doi = {10.1007/s10240-021-00125-5},
	fjournal = {Publications Math\'{e}matiques. Institut de Hautes \'{E}tudes Scientifiques},
	issn = {0073-8301},
	journal = {Publ. Math. Inst. Hautes \'{E}tudes Sci.},
	mrclass = {Prelim},
	mrnumber = {4292741},
	number = {1},
	pages = {327--366},
	title = {Singularities of solutions of time dependent {H}amilton-{J}acobi equations. {A}pplications to {R}iemannian geometry},
	url = {https://mathscinet.ams.org/mathscinet-getitem?mr=4292741},
	volume = {133},
	year = {2021},
	Bdsk-Url-1 = {https://mathscinet.ams.org/mathscinet-getitem?mr=4292741}}

@article{MR2237158,
	author = {Davini, Andrea and Siconolfi, Antonio},
	date-added = {2021-04-02 15:10:46 +0800},
	date-modified = {2021-04-02 15:10:46 +0800},
	doi = {10.1137/050621955},
	fjournal = {SIAM Journal on Mathematical Analysis},
	issn = {0036-1410},
	journal = {SIAM J. Math. Anal.},
	mrclass = {49L25 (35B40 35F20 49K40)},
	mrnumber = {2237158},
	mrreviewer = {Fabio Bagagiolo},
	number = {2},
	pages = {478--502},
	title = {A generalized dynamical approach to the large time behavior of solutions of {H}amilton-{J}acobi equations},
	url = {https://mathscinet.ams.org/mathscinet-getitem?mr=2237158},
	volume = {38},
	year = {2006},
	Bdsk-Url-1 = {https://mathscinet.ams.org/mathscinet-getitem?mr=2237158}}

@article{Bogaevsky2002,
	author = {Bogaevsky, Ilya Aleksandrovich},
	date-added = {2020-07-13 01:52:45 +0800},
	date-modified = {2021-01-06 14:42:44 +0800},
	doi = {10.1016/S0167-2789(02)00652-8},
	fjournal = {Physica D. Nonlinear Phenomena},
	issn = {0167-2789},
	journal = {Phys. D},
	mrclass = {58K50 (35Q35 35Q53 76B99)},
	mrnumber = {1945478},
	mrreviewer = {Vladimir V. Tchernov},
	number = {1-2},
	pages = {1--28},
	title = {Perestroikas of shocks and singularities of minimum functions},
	url = {https://mathscinet.ams.org/mathscinet-getitem?mr=1945478},
	volume = {173},
	year = {2002},
	Bdsk-Url-1 = {https://mathscinet.ams.org/mathscinet-getitem?mr=1945478}}

@article{Cannarsa_Chen_Cheng2019,
	author = {Cannarsa, Piermarco and Chen, Qinbo and Cheng, Wei},
	date-added = {2019-06-25 02:51:07 +0800},
	date-modified = {2019-06-25 02:51:55 +0800},
	doi = {10.1016/j.jde.2019.03.020},
	fjournal = {Journal of Differential Equations},
	issn = {0022-0396},
	journal = {J. Differential Equations},
	mrclass = {37J50 (35F21 49L25)},
	mrnumber = {3950488},
	number = {4},
	pages = {2448--2470},
	title = {Dynamic and asymptotic behavior of singularities of certain weak {KAM} solutions on the torus},
	url = {https://mathscinet.ams.org/mathscinet-getitem?mr=3950488},
	volume = {267},
	year = {2019},
	Bdsk-Url-1 = {https://mathscinet.ams.org/mathscinet-getitem?mr=3950488}}

@article{ACNS2013,
	author = {Albano, Paolo and Cannarsa, Piermarco and Nguyen, Khai Tien and Sinestrari, Carlo},
	date-added = {2018-03-13 19:19:01 +0000},
	date-modified = {2021-01-06 14:36:01 +0800},
	fjournal = {Mathematische Annalen},
	issn = {0025-5831},
	journal = {Math. Ann.},
	mrclass = {35D40 (26B25 35A20 35A21 49J52)},
	mrnumber = {3038120},
	mrreviewer = {Luigi Rodino},
	number = {1},
	pages = {23--43},
	title = {Singular gradient flow of the distance function and homotopy equivalence},
	url = {https://doi.org/10.1007/s00208-012-0835-8},
	volume = {356},
	year = {2013},
	Bdsk-Url-1 = {http://mathscinet.ams.org/mathscinet-getitem?mr=3038120}}

@article{Albano2016_1,
	author = {Albano, Paolo},
	date-added = {2018-03-13 19:19:37 +0000},
	date-modified = {2018-03-13 19:32:11 +0000},
	fjournal = {Journal of Mathematical Analysis and Applications},
	issn = {0022-247X},
	journal = {J. Math. Anal. Appl.},
	mrclass = {35F21 (35A21)},
	mrnumber = {3535771},
	number = {2},
	pages = {1462--1478},
	title = {Global propagation of singularities for solutions of {H}amilton-{J}acobi equations},
	url = {https://doi.org/10.1016/j.jmaa.2016.07.037},
	volume = {444},
	year = {2016},
	Bdsk-Url-1 = {http://mathscinet.ams.org/mathscinet-getitem?mr=3535771}}

@article{Albano_Cannarsa1999,
	author = {Albano, Paolo and Cannarsa, Piermarco},
	date-added = {2018-03-13 19:19:11 +0000},
	date-modified = {2018-03-13 19:35:17 +0000},
	fjournal = {Annali della Scuola Normale Superiore di Pisa. Classe di Scienze. Serie IV},
	issn = {0391-173X},
	journal = {Ann. Scuola Norm. Sup. Pisa Cl. Sci. (4)},
	mrclass = {26B25 (41A50 49L20)},
	mrnumber = {1760538},
	mrreviewer = {Lud\v ek Zaj\'\i \v cek},
	number = {4},
	pages = {719--740},
	title = {Structural properties of singularities of semiconcave functions},
	url = {http://www.numdam.org/item?id=ASNSP_1999_4_28_4_719_0},
	volume = {28},
	year = {1999},
	Bdsk-Url-1 = {http://mathscinet.ams.org/mathscinet-getitem?mr=1760538}}

@article{Albano_Cannarsa2002,
	author = {Albano, Paolo and Cannarsa, Piermarco},
	date-added = {2018-03-13 19:18:30 +0000},
	date-modified = {2018-03-13 19:33:17 +0000},
	fjournal = {Archive for Rational Mechanics and Analysis},
	issn = {0003-9527},
	journal = {Arch. Ration. Mech. Anal.},
	mrclass = {35A20 (35F20)},
	mrnumber = {1892229},
	mrreviewer = {Luigi Rodino},
	number = {1},
	pages = {1--23},
	title = {Propagation of singularities for solutions of nonlinear first order partial differential equations},
	url = {https://doi.org/10.1007/s002050100176},
	volume = {162},
	year = {2002},
	Bdsk-Url-1 = {http://mathscinet.ams.org/mathscinet-getitem?mr=1892229}}

@article{Cannarsa_Cheng3,
	author = {Cannarsa, Piermarco and Cheng, Wei},
	date-added = {2018-03-13 19:18:07 +0000},
	date-modified = {2018-03-13 19:30:13 +0000},
	fjournal = {Calculus of Variations and Partial Differential Equations},
	issn = {0944-2669},
	journal = {Calc. Var. Partial Differential Equations},
	mrclass = {35F21 (37J50 49L25)},
	mrnumber = {3687884},
	number = {5},
	pages = {Art. 125, 31},
	title = {Generalized characteristics and {L}ax-{O}leinik operators: global theory},
	url = {https://doi.org/10.1007/s00526-017-1219-4},
	volume = {56},
	year = {2017},
	Bdsk-Url-1 = {http://mathscinet.ams.org/mathscinet-getitem?mr=3687884}}

@article{Cannarsa_Mazzola_Sinestrari2015,
	author = {Cannarsa, Piermarco and Mazzola, Marco and Sinestrari, Carlo},
	date-added = {2018-03-14 09:44:34 +0000},
	date-modified = {2018-03-14 09:47:02 +0000},
	fjournal = {Discrete and Continuous Dynamical Systems. Series A},
	issn = {1078-0947},
	journal = {Discrete Contin. Dyn. Syst.},
	mrclass = {35F21 (35A21 35F25 49J52)},
	mrnumber = {3392624},
	number = {9},
	pages = {4225--4239},
	title = {Global propagation of singularities for time dependent {H}amilton-{J}acobi equations},
	url = {https://doi.org/10.3934/dcds.2015.35.4225},
	volume = {35},
	year = {2015},
	Bdsk-Url-1 = {http://mathscinet.ams.org/mathscinet-getitem?mr=3392624}}

@book{Cannarsa_Sinestrari_book,
	author = {Cannarsa, Piermarco and Sinestrari, Carlo},
	date-added = {2018-03-13 19:18:47 +0000},
	date-modified = {2018-03-13 19:33:58 +0000},
	isbn = {0-8176-4084-3},
	mrclass = {49-02 (35F20 49K20 49L20)},
	mrnumber = {2041617},
	mrreviewer = {Pierre Cardaliaguet},
	pages = {xiv+304},
	publisher = {Birkh{\"a}user Boston, Inc., Boston, MA},
	series = {Progress in Nonlinear Differential Equations and their Applications},
	title = {Semiconcave functions, {H}amilton-{J}acobi equations, and optimal control},
	volume = {58},
	year = {2004},
	Bdsk-Url-1 = {http://mathscinet.ams.org/mathscinet-getitem?mr=2041617}}

@article{Cannarsa_Yu2009,
	author = {Cannarsa, Piermarco and Yu, Yifeng},
	date-added = {2018-03-13 19:18:57 +0000},
	date-modified = {2018-03-13 19:34:35 +0000},
	fjournal = {Journal of the European Mathematical Society (JEMS)},
	issn = {1435-9855},
	journal = {J. Eur. Math. Soc. (JEMS)},
	mrclass = {49K20 (26B25 35F21 49L20 49L25)},
	mrnumber = {2538498},
	mrreviewer = {Pietro Celada},
	number = {5},
	pages = {999--1024},
	title = {Singular dynamics for semiconcave functions},
	url = {https://doi.org/10.4171/JEMS/173},
	volume = {11},
	year = {2009},
	Bdsk-Url-1 = {http://mathscinet.ams.org/mathscinet-getitem?mr=2538498}}

@article{Crandall_Evans_Lions1984,
	author = {Crandall, Michael G. and Evans, Lawrence C. and Lions, Pierre-Louis},
	date-added = {2018-03-14 09:24:07 +0000},
	date-modified = {2021-01-06 14:51:13 +0800},
	fjournal = {Transactions of the American Mathematical Society},
	issn = {0002-9947},
	journal = {Trans. Amer. Math. Soc.},
	mrclass = {35F20 (35L60)},
	mrnumber = {732102},
	number = {2},
	pages = {487--502},
	title = {Some properties of viscosity solutions of {H}amilton-{J}acobi equations},
	url = {https://doi.org/10.2307/1999247},
	volume = {282},
	year = {1984},
	Bdsk-Url-1 = {http://mathscinet.ams.org/mathscinet-getitem?mr=732102}}

@article{Fathi1997_1,
	author = {Fathi, Albert},
	date-added = {2018-03-13 19:22:49 +0000},
	date-modified = {2018-03-13 19:36:24 +0000},
	fjournal = {Comptes Rendus de l'Acad{\'e}mie des Sciences. S{\'e}rie I. Math{\'e}matique},
	issn = {0764-4442},
	journal = {C. R. Acad. Sci. Paris S{\'e}r. I Math.},
	mrclass = {58F27 (58F05 70H35)},
	mrnumber = {1451248},
	mrreviewer = {Ugo C. Bessi},
	number = {9},
	pages = {1043--1046},
	title = {Th{\'e}or{\`e}me {KAM} faible et th{\'e}orie de {M}ather sur les syst{\`e}mes lagrangiens},
	url = {https://doi.org/10.1016/S0764-4442(97)87883-4},
	volume = {324},
	year = {1997},
	Bdsk-Url-1 = {http://mathscinet.ams.org/mathscinet-getitem?mr=1451248}}

@unpublished{Fathi_book,
	author = {Fathi, Albert},
	date-added = {2018-03-14 10:07:00 +0000},
	date-modified = {2018-06-15 23:28:18 +0800},
	note = {Cambridge University Press, Cambridge (to appear)},
	title = {{W}eak {KAM} theorem in {L}agrangian dynamics}}

@article{Hurley1995,
	author = {Hurley, Mike},
	date-added = {2018-05-04 06:35:18 +0000},
	date-modified = {2018-05-04 06:39:30 +0000},
	fjournal = {Journal of Dynamics and Differential Equations},
	issn = {1040-7294},
	journal = {J. Dynam. Differential Equations},
	mrclass = {58F25 (34C35 54H20 58F12)},
	mrnumber = {1348735},
	mrreviewer = {Romeo F. Thomas},
	number = {3},
	pages = {437--456},
	title = {Chain recurrence, semiflows, and gradients},
	url = {https://doi.org/10.1007/BF02219371},
	volume = {7},
	year = {1995},
	Bdsk-Url-1 = {http://klymene.mpim-bonn.mpg.de/mathscinet-getitem?mr=1348735}}

@article{Rifford2008,
	author = {Rifford, Ludovic},
	date-added = {2018-03-14 09:50:40 +0000},
	date-modified = {2018-03-14 09:51:12 +0000},
	fjournal = {Communications in Partial Differential Equations},
	issn = {0360-5302},
	journal = {Comm. Partial Differential Equations},
	mrclass = {49L25 (35B65 35F20)},
	mrnumber = {2398240},
	number = {1-3},
	pages = {517--559},
	title = {On viscosity solutions of certain {H}amilton-{J}acobi equations: regularity results and generalized {S}ard's theorems},
	url = {https://doi.org/10.1080/03605300701382522},
	volume = {33},
	year = {2008},
	Bdsk-Url-1 = {http://mathscinet.ams.org/mathscinet-getitem?mr=2398240}}

@article{Yu2006,
	author = {Yu, Yifeng},
	date-added = {2018-03-14 09:45:00 +0000},
	date-modified = {2018-03-14 09:49:06 +0000},
	fjournal = {Annali della Scuola Normale Superiore di Pisa. Classe di Scienze. Serie V},
	issn = {0391-173X},
	journal = {Ann. Sc. Norm. Super. Pisa Cl. Sci. (5)},
	mrclass = {35F20 (35A21)},
	mrnumber = {2297718},
	number = {4},
	pages = {439--444},
	title = {A simple proof of the propagation of singularities for solutions of {H}amilton-{J}acobi equations},
	volume = {5},
	year = {2006},
	Bdsk-Url-1 = {http://mathscinet.ams.org/mathscinet-getitem?mr=2297718}}
\begin{table}[h]
    \caption{Notation}
    \begin{tabularx}{\textwidth}{p{0.22\textwidth}X}
    \toprule
    $\chi_{_A}$ &  the characteristic function of a set $A\subset\R$ ($=1$ on $A$, $=0$ on $\R\setminus A$)\\
    $\T^ d$ & the flat $d$-dimensional torus\\
    $F:\T^d\rightrightarrows \R^d$ & a set-valued map from $\T^d$ into subsets of $\R^d$ \\
    $p \otimes q$& the tensor product of two vectors $p,q\in\R^d$\\
    $\displaystyle\omega_f$
    & the modulus of continuity of $f\in C(\T^d)$\\
    $\co S$ & the convex hull of the set $S$\\
    $\LL^{1}$ &  the one-dimensional Lebesgue measure\\
    $d_S(x)$ & the Euclidean distance of $x$ from the closed set $S$\\
   $D^+u(x)$ & the super-differential of $u$ at $x$\\
   $p_0(x)$ & the element of minimal norm of $D^+u(x)$\\
   $\Crit(u)$ & the critical set of  $u$\\
   $\Sing(u)$ & the singular set of  $u$\\
   $\Crit^*(u)$ & the regular critical set of   $u$\\
   $\Xxu(t,x)$ & the generalized gradient flow of $u$\\
   $\crit(x)$ & the critical time of $u$ at $x$\\
   $\cut(x)$ & the cut time  of   $u$  at $x$\\
   $\Cut(u)$ & the cut locus of   $u$\\
   $\mu^{T}_{x}$  & the (individual) occupational measure of $\Xxu(\cdot,x)$\\
   $\W_u(x)$ &  the family of all  occupational measures of $\Xxu(\cdot,x)$\\
        $\mane$ &  Ma\~n\'e's critical value\\
    $\delta_x$ & the Dirac measure centred at $x$\\
    \bottomrule
    \end{tabularx}
\end{table}
\end{document}